\documentclass[a4paper,latexcad,twoside,11pt]{article}

\usepackage{authblk}

\usepackage{enumerate}

\usepackage[mathscr]{euscript}
\usepackage{xcolor,color}
\usepackage{amsthm}
\usepackage{amsmath}
\usepackage{stmaryrd} 
\usepackage{amssymb}
\usepackage{makeidx}
\usepackage{indentfirst}
\usepackage{amsfonts}
\usepackage{bm}
\usepackage{cases}
\usepackage{graphicx} 
\usepackage{subcaption}
\usepackage{hyperref}
\usepackage[a4paper]{geometry}

\newcommand\red[1] {{\color{red} #1}}
\newcommand\Red[1] 
{{\color{red} {\bf #1}}}

\newtheorem{theorem}{Theorem}
\newtheorem{lemma}[theorem]{Lemma}
\newtheorem{problem}{Problem}
\newtheorem{corollary}[theorem]{Corollary}

\newcommand{\brk}[1]{\llbracket #1 \rrbracket}

\newcommand\equ[2]
{
	\begin{equation}\label{#1}
		#2
	\end{equation}
}

\newcommand \eqn[2]
{\begin{eqnarray}\label{#1}
		#2
	\end{eqnarray}
}

\def \N {{\mathbb N}}
\def \hyh {{\cal H}}
\def \sets {{\cal S}}
\def \sete {{\cal E}}
\def \setm {{\cal M}}
\def \infe {\equiv_{\infty}} 
\def \infl {<_{\infty}}

\begin{document}
	
\baselineskip 5.7mm

\title{DP color functions versus chromatic polynomials for hypergraphs (I)}

\author[1]{Ruiyi Cui}
	
\author[1]{Liangxia Wan\thanks{ Corresponding author. 
Email: lxwan@bjtu.edu.cn. 
}}
	
\author[2]{ Fengming Dong\thanks{
Email: fengming.dong@nie.edu.sg and donggraph@163.com.}}

\affil[1]{\footnotesize 
	School of Mathematics and Statistics,
	Beijing Jiaotong University, Beijing 100044, China}

\affil[2]{\footnotesize National Institute of Education,
	Nanyang Technological University, Singapore}

\date{}

\maketitle

\begin{abstract}
For a hypergraph $\mathcal{H}$,
the DP color function $P_{DP}(\mathcal{H},k)$ 
of $\hyh$
is an extension of the chromatic polynomial $P(\mathcal{H},k)$ with the property that $P_{DP}(\mathcal{H},k) \le P(\mathcal{H},k)$ for all positive integers $k$. 
In this article, we primarily investigate the influence of the minimum cycle length on the DP-coloring function, as well as the relevant properties of the DP-coloring function of 
the join $\hyh \vee K_p$ of $\hyh$ and $K_p$.
We show that
for any linear and 
uniform hypergraph $\mathcal H$  
with even girth,
there exists 
a positive integer
$N$ such
that $P_{DP} (\mathcal H, k) < P(\mathcal H, k)$ 
for all integers $k\ge N$, 
and this conclusion also holds 
for any hypergraph $\mathcal{H}$
that contains an edge $e$ 
with the properties that  
$\hyh-e$ has exactly 
$|e|-1$ components and 
any shortest cycle in $\hyh$ containing $e$ is an even cycle.
For the hypergraph $\hyh\vee K_p$, 
we prove that if $\hyh$ is uniform,  
then there exist positive integers $p$ and $N$ such that $P_{DP}(\mathcal{H} \vee  K_p,k)=P(\mathcal{H} \vee K_p,k)$ holds for all integers $k\geq N$.

\end{abstract}

\section{Introduction}

For any graph $G$ and a positive integer $k$, a proper $k$-coloring of $G$
is a way of assigning 
a set of $k$ colors 
to vertices in $G$ 
such that no two adjacent vertices share the same color. 
In the 1970s, the vertex coloring was independently generalized to the list coloring by Vizing \cite{Vi76}  and Erd\H{o}s, Rubin, and Taylor \cite{ERT79},  respectively. 
Later, in 2015, Dvo\v{r}\'{a}k  and 
Postle \cite{DP18} generalized the list coloring to the DP-coloring (also called correspondence coloring) 
for proving the result 
that every planar graph without cycles of length $4$ to $8$ is $3$-choosable.  
The DP-coloring has been extensively studied \cite{BHKMMSTW22,DY22,KM21,LLYY19, MP22,Mu22}.  

Let $\N$ denote 
	the set of 
	positive integers  
	and let $\brk{k} $ denote the set  $\{i\in \N: i\le k\}$.
In 1912, Birkhoff \cite{Bi12} introduced the chromatic polynomial $P(G,k)$, 
which counts the number of $k$-proper colorings of $G$
for any $k\in \N$, 
for the purpose of proving the four-color conjecture. 
Kostochka and Sidorenko \cite{KS92} extended $P(G,k)$
to the list color function $P_l(G,k)$ in 1992. 
Some results on the list color functions
were provided by Thomassen \cite{Th09}.
 In 2021, 
Kaul and Mudrock~\cite{KM21}
further extended 
$P_l(G,k)$
to the DP color function $P_{DP}(G,k)$ of $G$.

 Obviously, 
 $P_l(G,k)\le P(G,k)$
 holds for any graph $G$ and 
 $k\in \N$.
 It has been shown that 
 for any graph $G$ of size $m$,  
 $P_l(G,k)=P(G,k)$
 holds for all integers $k\ge m-1$ (see \cite{Dong23}).
However, 
	there is no such a 
 conclusion on the relation between the  
  DP color function of a graph $G$ and its 
 chromatic polynomial. 
 Kaul and Mudrock~\cite{KM21} proved that if $G$ is a graph with even girth, then there exists an $N\in \mathbb N$ such
 that $P_{DP} (G, k) < P(G, k)$ whenever $k \ge N$. 
 An edge $e$ in a connected graph $G$ is called a bridge of $G$ if $G- e$ is disconnected. 
 For each edge $e$ in $G$, let $\ell(e) = \infty$ if $e$ is a bridge of $G$, and let $\ell(e)$ denote the length of a shortest cycle in $G$ containing $e$ otherwise.
 Dong and Yang \cite{DY22} show that if 
 $\ell(e)$ is even for some $e\in E(G)$, then 
 there exists an $N\in \mathbb N$ such that 
 $P_{DP}(G) < P(G)$ 
 for all $k\ge N$.
Mudrock and Thomason~\cite{Mu21} showed that for any graph $G$, 
there exists  
$N\in \N$ such that $P_{DP}(G\vee K_1, k)
=P_{DP}(G\vee K_1, k)$
for all $k\ge N$,
where $G\vee K_1$ 
	is the graph obtained from $G$ by adding a new vertex $w$ and adding new edges joining $w$ to all vertices in $G$.
Their result answered a problem 
posed by Kaul and Mudrock~\cite{KM21}.
All these results have been further extended by Zhang and Dong~\cite{Zh23}.

Helgason \cite{He06} introduced the chromatic polynomial 
$P(\hyh,k)$ of a hypergraph 
$\hyh$
in 1972 and 
the authors of this article introduced
	the DP color function 
	$P_{DP}(\hyh,k)$ 
	of a hypergraph $\hyh$ in \cite{CWD25}.
In this article, we partially extend the above comparison between 
$P_{DP}(G,k)$ and $P(G,k)$ for graphs $G$ to a corresponding comparison between 
$P_{DP}(\hyh,k)$ and $P(\hyh,k)$ 
for hypergraphs $\hyh$.

For a hypergraph $\hyh=(V,E)$,
a cycle $C$ of length $p$ in $\mathcal{H}$ is a subhypergraph consisting of $p$ distinct vertices $v_1, e_2, \dots, v_p$ and $p$ distinct edges $e_1, e_2, \ldots, e_p$ such that $\{v_{i-1}, v_{i}\}
\subseteq  e_i$ for each $i \in \brk{p}$ (indices are taken modulo $p$).
For any edge $e\in E$,
{\it the girth of $e$} in $\hyh$,
denoted by $\ell(e)$, 
is defined to be $\infty$
if $\hyh$ has no cycles containing $e$, 
and 
the length of a shortest cycle $C$ 
in $\hyh$ which contains $e$
otherwise. 
The girth of $\hyh$, 
denoted by $g(\hyh)$, 
is defined to be the minimum 
value of $\ell(e)$ over all edges 
$e$ in $\hyh$.

For any two functions $f(x)$
and $g(x)$, 
we may write $f(k)\infe g(k)$
if there exists $N\in \N$ such that $f(k)=g(k)$ holds
for all integers $k\ge N$;
and $f(k)\infl g(k)$
if there exists $N\in \N$ such that $f(k)<g(k)$ holds
for all integers $k\ge N$. We will study the following three
problems in this article.

 \begin{problem}\label{Pro-1}
 	Is it true that for any hypergraph $\mathcal H$,
 		if $g(\hyh)$ is even, 
 		then $P_{DP}(\hyh, k)\infl
 		P(\hyh, k)$?
 \end{problem}
 
  \begin{problem}\label{Pro-2}
 Is it true that for any hypergraph $\mathcal H$,
  	if $\ell(e)$ is even for some edge $e$ in $\hyh$,  
  	then $P_{DP}(\hyh, k)\infl
  	P(\hyh, k)$?
 \end{problem}

The {\it join} of two vertex-disjoint  hypergraphs $\hyh_1$ and $\hyh_2$,
denoted by 
$\mathcal{H}_1 \vee \mathcal{H}_2$, 
 is the hypergraph obtained from $\mathcal{H}_1$ and $\mathcal{H}_2$ by adding new edges $\{u,v\}$ for all vertices $u$ in $\mathcal{H}_1$ and all vertices $v$ in $\mathcal{H}_2$. 
 
  \begin{problem}\label{Pro-3}
  	 	Is it true that for any hypergraph $\mathcal H$,
  there exist $p\in \mathbb{N}$
  such that 
  $P_{DP}(\hyh\vee K_p, k)\infe
  P(\hyh\vee K_p, k)$?
 \end{problem}

Theorem~\ref{Gir-1} confirms that Problem~\ref{Pro-1} has a positive answer for all 
linear and uniform hypergraphs,
which Problem~\ref{Pro-2}
and Problem~\ref{Pro-3}
are partially answered by 
Theorems~\ref{EvenCyc}
and~\ref{Ans-3}, respectively.

\section{Background}

In this article each hypergraph is nonempty, finite and simple unless otherwise noted. 
A hypergraph $\mathcal{H} = (V, E)$ consists of a finite non-empty set $V$ and a subset $E$ of $2^V$ (i.e., the power set of $V$) 
with $|e| \geq 2$ for each $e \in E$. 
For any hypergraph $\hyh=(V, E)$, let $n(\mathcal{H})=|V|$ and  $m(\mathcal{H})=|E|$. 
A hypergraph $\mathcal{H}$ is \textit{$r$-uniform} if  $|e| = r$ for each edge $e \in E$. 
The \textit{degree} $d(v)$ of a vertex $v \in V$ in $\hyh$ is the number of edges in $\hyh$ containing $v$. A hypergraph is \textit{linear} if every pair of distinct edges intersects in at most one vertex. 
 A \textit{hypertree} is a connected linear hypergraph without cycles. A \textit{unicyclic hypergraph} is a connected hypergraph containing exactly one cycle. A hypergraph $\mathcal{H}= (V,E)$ is \textit{complete} if $E = P(V) \setminus \{ \emptyset \}$ where $P(V)$ denotes the powerset of $V$. A subset $S\subseteq E(\mathcal{H})$ is also called a {\it partial 
hypergraph} of $\mathcal{H}$.  

\subsection{The chromatic polynomial of a hypergraph}
 
For any positive integer $k$, a \textit{proper $k$-coloring} of the hypergraph $\mathcal{H}$ is a mapping $f:V(\mathcal{H})\to \brk{k}$ such that, for every edge $e\in E(\mathcal{H})$, there exist two distinct vertices $u,v\in e$ satisfying $f(u)\neq f(v)$. The number of proper $k$-colorings of $\mathcal{H}$ is a polynomial $P(\mathcal{H},k)$ in the variable $k$, of degree $|V(\mathcal{H})|$ in $k$, called \textit{chromatic polynomial} of $\mathcal{H}$.

Dohmen \cite{Do95} generalized the result obtained by Birkhoff and Whitney \cite{Wh32} as follows.
\begin{lemma}[\cite{Do95}] \label{dohmen}
	Let $\mathcal{H}$ be a hypergraph, 
	$k \in \mathbb{N}$. Then
	$$
		P(\mathcal{H},k) = \sum\limits_{S \subseteq \mathcal{H}} (-1)^{m(S)}k^{n(\mathcal{H})-n(S)+c(S)}
		$$
	where $c({S})$ is the number of components of ${S}$.
\end{lemma}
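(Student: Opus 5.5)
The natural route is inclusion--exclusion over the edge set, the hypergraph analogue of Whitney's broken-circuit-free expansion. For each edge $e\in E(\hyh)$ let $A_e$ be the set of maps $f:V(\hyh)\to\brk{k}$ that are constant on $e$, i.e. that violate $e$. A map is a proper $k$-coloring exactly when it lies in none of the $A_e$, so
$$P(\hyh,k)=k^{n(\hyh)}-\Big|\bigcup_{e\in E}A_e\Big| = \sum_{S\subseteq E(\hyh)}(-1)^{|S|}\Big|\bigcap_{e\in S}A_e\Big|,$$
where the empty intersection is the set of all $k^{n(\hyh)}$ maps, corresponding to $S=\emptyset$.

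The key step is to count $\bigcap_{e\in S}A_e$ for a fixed partial hypergraph $S$. Here I take $n(S)$ to be the number of vertices covered by the edges of $S$, and $c(S)$ the number of components of the hypergraph on those vertices with edge set $S$. A map lies in every $A_e$, $e\in S$, exactly when it is constant on each edge of $S$. Since overlapping edges force equal colors, this is equivalent to being constant on each component of $S$ (connected components under the relation "share an edge"). So the map is determined by choosing one color per component of $S$, which gives $k^{c(S)}$ choices, together with an arbitrary color for each of the $n(\hyh)-n(S)$ uncovered vertices. Hence
$$\Big|\bigcap_{e\in S}A_e\Big|=k^{\,n(\hyh)-n(S)+c(S)},$$
and this agrees with the $S=\emptyset$ term under the convention $n(\emptyset)=c(\emptyset)=0$. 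Substituting, with $m(S)=|S|$, yields the formula.

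The only delicate point is this convention: whether $S$ is viewed as spanning all of $V$, in which case uncovered vertices count as isolated components, or only its covered vertices. The plan fixes the latter, consistent with the exponent $n(\hyh)-n(S)+c(S)$. The fact that constancy on each edge propagates to constancy on each component follows by induction along a path of edges of $S$, each sharing a vertex with the next. No obstacle beyond this bookkeeping is expected.
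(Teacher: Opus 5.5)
Your inclusion--exclusion argument is correct and complete; the paper gives no proof of this lemma and simply quotes it from Dohmen, and inclusion--exclusion over subsets of edges is the standard way this expansion is derived. The convention you flag is harmless: reading $S$ as spanning replaces $n(S)$ by $n(\mathcal{H})$ and adds one isolated component for each vertex not covered by $S$, so the exponent $n(\mathcal{H})-n(S)+c(S)$ has the same value under either reading.
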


Let $\mathcal{H} = (V, E)$
and $V_0 \subset V$.
The \emph{contraction} of $\mathcal{H}$ onto $V_0$, denoted $\mathcal{H} \cdot V_0$, is obtained by identifying all vertices in $V_0$ into a single new vertex $v_0 \notin V$. Formally, $\mathcal{H} \cdot V_0$ is the hypergraph with vertex set $(V \setminus V_0) \cup \{v_0\}$ and edge set
\[
\bigl\{e \in E : e \cap V_0 = \emptyset \bigr\} \cup \bigl\{ 
(e \setminus V_0) \cup \{v_0\} : e \cap V_0 \neq \emptyset \bigr\}.
\]

Jones \cite{Jo78} generalized the deletion-contraction formula for chromatic polynomials of graphs to hypergraphs.

\begin{lemma}[\cite{Jo78}]
	Let $\mathcal{H} = (V, E)$ be a hypergraph. For any $e \in E$,
	$$
		P(\mathcal{H},k) = P(\mathcal{H}-e, k) - P(\mathcal{H}/e,k) \label{D-C}
		$$
	where $\hyh-e$ is the hypergraph $(V, E\setminus \{e\})$ and 
	$\mathcal{H}/e$ is obtained from $\mathcal{H}$ by contracting edge $e$ (i.e. the hypergraph $(\mathcal{H} - e) \cdot e$).
\end{lemma}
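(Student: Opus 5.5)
The plan is a deletion–contraction argument at the level of proper colorings: split the proper $k$-colorings of $\mathcal{H}-e$ according to whether the edge $e$ is monochromatic. Fix $e\in E$ and $k\in\mathbb{N}$. Every proper $k$-coloring of $\mathcal{H}$ is a proper $k$-coloring of $\mathcal{H}-e=(V,E\setminus\{e\})$, because $\mathcal{H}-e$ has the same vertex set and fewer edges to satisfy. Conversely, a proper $k$-coloring $f$ of $\mathcal{H}-e$ is proper for $\mathcal{H}$ exactly when $e$ contains two vertices with different colors, that is, when $f$ is not constant on $e$. Hence
\[
P(\mathcal{H}-e,k)=P(\mathcal{H},k)+\bigl|\{f \text{ proper for } \mathcal{H}-e : f \text{ constant on } e\}\bigr|,
\]
and it remains to show that the last set has exactly $P(\mathcal{H}/e,k)$ elements.

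For this I will build an explicit bijection $\Phi$ with the proper $k$-colorings of $\mathcal{H}/e=(\mathcal{H}-e)\cdot e$. Let $v_0$ be the new vertex. Given $f$ proper for $\mathcal{H}-e$ and constant on $e$ with value $c$, define $g=\Phi(f)$ by $g(v_0)=c$ and $g(v)=f(v)$ for $v\notin e$. The inverse sends $g$ to the map $f$ with $f(u)=g(v_0)$ for $u\in e$ and $f=g$ elsewhere. Since this correspondence is clearly bijective between maps constant on $e$ and all maps on $(V\setminus e)\cup\{v_0\}$, the only thing to check is that properness is preserved in both directions.

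That check is the main obstacle, because contraction can distort edges. Consider an edge $e'\neq e$ of $\mathcal{H}-e$.
\begin{itemize}
\item If $e'\cap e=\emptyset$, then $e'$ is unchanged and $f$ and $g$ agree on it.
\item If $e'\cap e\neq\emptyset$, its image is $e''=(e'\setminus e)\cup\{v_0\}$. The multiset of colors $f$ gives to $e'$ and the set of colors $g$ gives to $e''$ have the same underlying set $\{c\}\cup f(e'\setminus e)$, since all of $e'\cap e$ receives $c$.
\end{itemize}
So $e'$ is non-monochromatic under $f$ if and only if $e''$ is non-monochromatic under $g$.

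Two degenerate cases need separate treatment.
\begin{itemize}
\item If $e'\subseteq e$, then $e''=\{v_0\}$ is a singleton edge. It can never be properly colored, and correspondingly $e'$ is monochromatic under any $f$ constant on $e$. Both sides then count zero, consistent with the convention that a singleton edge forces $P=0$.
\item Distinct edges $e'$ may have the same image, so the edge set of $\mathcal{H}/e$ is a set with duplicates merged. Duplicates impose the same condition, so this changes nothing.
\end{itemize}
With these checks in hand, $\Phi$ is a bijection, which gives $P(\mathcal{H}-e,k)=P(\mathcal{H},k)+P(\mathcal{H}/e,k)$ for every $k\in\mathbb{N}$ and hence the claimed identity, also as polynomials.
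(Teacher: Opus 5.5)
Your proof is correct and complete. Splitting the proper $k$-colorings of $\mathcal{H}-e$ by whether $e$ is monochromatic, and matching the monochromatic ones to proper colorings of $\mathcal{H}/e$ through the equality of color sets $f(e')=\{c\}\cup f(e'\setminus e)=g(e'')$, is the standard argument for this deletion--contraction formula. Your handling of singleton and merged edges is also right.

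The paper gives no proof of this lemma; it quotes it from Jones's 1978 work, so there is no in-paper argument to compare against. One small remark: your separate singleton case is already covered by the general color-set argument, since $g(\{v_0\})=\{c\}$ has exactly one element, just as $f(e')$ does when $e'\subseteq e$.
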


\subsection{The DP color function of a hypergraph}

 Bernshteyn and Kostochka \cite{BK19} introduced the DP-coloring of a hypergraph in 2019. Let $\varphi \colon X \rightharpoonup Y$ be a \textit{partial map} $\varphi$ from a subset of $X$ to $Y$. $[X \rightharpoonup Y]$ denotes the set of all partial maps. For $k \in \mathbb{N}^+$, consider a family $\mathcal{F} \subseteq [X \rightharpoonup \brk{k} ]$ of partial maps. The domain $\text{dom}(\mathcal{F})$ of $\mathcal{F}$ is defined by
$$
\text{dom}(\mathcal{F}) := \{ \text{dom}(\varphi) : \varphi \in \mathcal{F} \}.
$$

Then each $\varphi \in \mathcal{F}$ is a subset of the Cartesian product $X \times \brk{k} $. Thus we can view $\mathcal{F}$ as a hypergraph on $X \times \brk{k} $ and view $\text{dom}(\mathcal{F})$ as a hypergraph on $X$.
A function $f: X \to \brk{k} $ avoids a partial map $\varphi : X \rightharpoonup \brk{k} $ if $\varphi \nsubseteq f$. Given a family $\mathcal{F} \subseteq [X \rightharpoonup \brk{k} ]$ of partial maps, if a function $f : X \to \brk{k}$ avoids all $\varphi \in \mathcal{F}$, then it is a \textit{$(k, \mathcal{F})$-coloring} (or simply an \textit{$\mathcal{F}$-coloring}).

A \textit{$k$-fold cover} of $\mathcal{H}$ is a family of partial maps $\mathcal{F} \subseteq [X \rightharpoonup \brk{k}]$ such that $\text{dom}(\mathcal{F}) \subseteq V(\mathcal{H})$ and if $\text{dom}(\varphi) = \text{dom}(\psi)$ for distinct $\varphi, \psi \in \mathcal{F}$, then $\varphi \cap \psi = \emptyset$.
Let $\mathcal{F}_e = \{ \varphi \in \mathcal{F} : \text{dom}(\varphi) = e \}$ and let $\mathcal{F} \setminus \mathcal{F}_e = \{ \varphi \in \mathcal{F} : \text{dom}(\varphi) \neq e \}$. If $\mathcal{F}$ is a $k$-fold cover of $\mathcal{H}$, then $\mathcal{F} \setminus \mathcal{F}_e$ is a $k$-fold cover of $\mathcal{H} - e$. If $\mathcal{F}$ is a $k$-fold cover of a hypergraph $\mathcal{H}$, then for each edge $e$ in $\mathcal{H}$
$$
|\mathcal{F}_e| = |\{ \varphi \in \mathcal{F} : \text{dom}(\varphi) = e \}| \leq k.
$$
If $|\mathcal{F}_e| = k$ for each $e\in E(\mathcal{H})$, then $\mathcal F$ is perfect.

A hypergraph $\mathcal{H}$ on a set $X$ is \textit{$k$-DP-colorable} if each $k$-fold cover $\mathcal{F}$ of $\mathcal{H}$ admits a $(k, \mathcal{F})$-coloring $f: X \to \brk{k}$.
Let $\mathcal{H}$ be a hypergraph with $E(\mathcal{H})=\{e_j: j\in \brk{m}\}$. A \textit{natural $k$-cover} of $\mathcal{H}$ is denoted by
$$
\iota_{\mathcal H}(k)
=\left \{\varphi^{(j)}_i:  i\in \brk{k},  j\in \brk{m}\right \},
$$ 
where $\varphi^{(j)}_i$ is the mapping: $e_j\mapsto \{i\}$. 
If $\mathcal I$ is the
set of $\iota_{\mathcal H}(k)$-colorings of $\mathcal H$ and $\mathcal J$ is the set of proper $k$-colorings of $\mathcal H$, then the function
$f: \mathcal J\rightarrow \mathcal I$ given by
$$ f(c) = \{(v,c(v)): v\in V
(\mathcal H)\}$$
is a bijection between 
proper $k$-colorings
and 
$\iota_{\mathcal H}(k)$-colorings  of $\mathcal H$.

For a 
$k$-fold cover $\mathcal F$
of a hypergraph $\mathcal H$,
let $P_{DP}(\mathcal H,\mathcal F)$ denote the number of $\mathcal F$-colorings of $\mathcal H$. 
The \textit{DP color function}, denoted by $P_{DP}(\mathcal H,k)$,
 is the minimum value of $P_{DP}(\mathcal H,\mathcal F)$ over all $k$-fold covers $\mathcal{F}$ of $\mathcal{H}$.

It is clear that
for any hypergraph $\mathcal H$ and $k\in \mathbb{N}$
$$
	P_{DP}(\mathcal H,k)\le P(\mathcal H,k).
	$$

We end this section with two 
known results on upper and lower bounds of DP color functions of hypergraphs in \cite{CWD25}.

\begin{lemma}[\cite{CWD25}] \label{CWD}
	Let $\mathcal{H} = (X, E)$ is an r-uniform hypergraph with $n$ vertices and $m$ hyperedges, $n,m \in \mathbb{N}$. Then for each $k \in \mathbb{N}$
	$$
		P_{DP}(\mathcal{H},k) \le 
		k^{n-(r-1)m}(k^{r-1}-1)^{m}.
	$$
\end{lemma}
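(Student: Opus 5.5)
The plan is a probabilistic (averaging) argument over uniformly random $k$-fold covers, in the spirit of the graph case. Write $E(\mathcal H)=\{e_1,\dots,e_m\}$ with $|e_j|=r$ for all $j$. For each edge $e_j$ and each vertex $v\in e_j$, choose independently a uniformly random permutation $\sigma_{j,v}$ of $\brk{k}$. Define the cover
$$\mathcal F=\{\varphi^{(j)}_i: i\in\brk{k},\ j\in\brk{m}\},\qquad \varphi^{(j)}_i(v)=\sigma_{j,v}(i)\ \ (v\in e_j).$$
For distinct $i\ne i'$ the maps $\varphi^{(j)}_i$ and $\varphi^{(j)}_{i'}$ have the same domain $e_j$. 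Since each $\sigma_{j,v}$ is a permutation, they disagree at every vertex, so they are disjoint as subsets of $X\times\brk{k}$. Hence $\mathcal F$ is a (perfect) $k$-fold cover of $\mathcal H$, and $P_{DP}(\mathcal H,k)\le P_{DP}(\mathcal H,\mathcal F)$ for every realisation. It therefore suffices to show $\mathbb E[P_{DP}(\mathcal H,\mathcal F)]\le k^{n-(r-1)m}(k^{r-1}-1)^m$, because some realisation attains at most the expectation.

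To compute the expectation, fix a function $f:X\to\brk{k}$. The coloring $f$ fails to avoid $\mathcal F_{e_j}$ exactly when there is an $i$ with $\sigma_{j,v}(i)=f(v)$ for all $v\in e_j$. This happens precisely when the values $\sigma_{j,v}^{-1}(f(v))$, $v\in e_j$, all coincide. These $r$ values are independent and uniform on $\brk{k}$, so the event has probability $k\cdot k^{-r}=k^{-(r-1)}$.

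The events for different edges involve disjoint families of permutations $\{\sigma_{j,v}\}_{v\in e_j}$, so they are independent. Hence
$$\Pr[f \text{ is an } \mathcal F\text{-coloring}]=\bigl(1-k^{-(r-1)}\bigr)^m.$$
Summing over all $k^n$ functions $f$ gives
$$\mathbb E[P_{DP}(\mathcal H,\mathcal F)]=k^n\bigl(1-k^{1-r}\bigr)^m=k^{n-(r-1)m}(k^{r-1}-1)^m.$$
Therefore some cover attains at most this value, and the bound follows.

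The one point needing care is the independence across edges. It must come from using separate permutations per (edge, vertex) incidence, rather than per vertex, so that different edges share no randomness even when they overlap in vertices. With that choice there is no real obstacle. The only other thing to confirm is that the paper's cover definition permits arbitrary bijective relabelings on each edge, which it does, since only disjointness of maps with equal domain is required.
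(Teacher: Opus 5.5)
The paper gives no proof of this lemma: it is quoted from \cite{CWD25} and used only as a black box in the proof of Theorem~\ref{Gir-1}, so there is no in-text argument to compare yours with. Your random-cover averaging argument is correct and complete.

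\begin{enumerate}[(i)]
\item The per-incidence permutations $\sigma_{j,v}$ make the $k$ maps on each edge pairwise disjoint. So every realisation is a genuine (perfect) $k$-fold cover, and $P_{DP}(\mathcal H,k)\le P_{DP}(\mathcal H,\mathcal F)$ holds for each one.
\item A fixed $f$ is blocked by $e_j$ exactly when the $r$ independent, uniform values $\sigma_{j,v}^{-1}(f(v))$, $v\in e_j$, all coincide. This happens with probability $k\cdot k^{-r}=k^{1-r}$.
\item The blocking events for different edges depend on disjoint sets of permutations, so they are independent even when the edges overlap.
\item Linearity of expectation gives $\mathbb{E}[P_{DP}(\mathcal H,\mathcal F)]=k^{n}(1-k^{1-r})^{m}=k^{n-(r-1)m}(k^{r-1}-1)^{m}$. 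Hence some cover has at most this many colorings.
\end{enumerate}

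The degenerate case $k=1$ is handled automatically: the bound is $0$, and each edge blocks the unique function with probability $1$. For $r=2$ the argument recovers the familiar graph bound $k^{n-m}(k-1)^m$.
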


\begin{lemma}[\cite{CWD25}]
	\label{CWD-1}
	Suppose that $\mathcal{F}$ is a $k$-fold cover of a hypergraph $\mathcal{H}$, where $k \in \mathbb{N}$. 
	Suppose that 
	$e$ is an edge in $\hyh$ with 
	$|e|=n_{e} \ge 2$ and $c(\mathcal{H}-e)=n_e-1$.
	Then $\mathcal{F}' = \mathcal{F} - \mathcal{F}_{e}$ is a $k$-fold cover of $\mathcal{H}-e$, where $ \mathcal{F}_{e} = \{ \varphi \in \mathcal{F} : dom(\varphi) = e\}$. If there is a bijection between $\mathcal{F}'$-colorings of $\mathcal{H}-e$ and proper $k$-colorings of  $\mathcal{H}-e$, then there exists a $k$-fold cover $\mathcal{F}^{*}$ of $\mathcal{H}$ such that
	\begin{eqnarray*}
		\nonumber
		P_{DP}(\mathcal{H},\mathcal{F}) &\ge& P_{DP}(\mathcal{H}, \mathcal{F}^{*})\\
	&	= &\min \left\{ P(\mathcal{H},k), \frac{(k^{n_e-1}-1)P(\mathcal{H}-e,k)-k^{n_e-2}P(\mathcal{H},k)}{k^{n_e-2}(k-1)} \right\}. 		
	\end{eqnarray*}	
\end{lemma}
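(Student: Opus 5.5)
The plan is to count $\mathcal F$-colorings as the $\mathcal F'$-colorings of $\mathcal H-e$ that avoid every $\varphi\in\mathcal F_e$, and to use the hypothesis $c(\mathcal H-e)=n_e-1$ to control how many $\mathcal F'$-colorings extend a given map on $e$. I assume $\mathcal H$ is connected (otherwise I would argue componentwise). Then $c(\mathcal H-e)=n_e-1$ means that exactly two vertices $u,v\in e$ lie in a common component of $\mathcal H-e$, while every other vertex of $e$ lies in its own component. I read the hypothesized bijection between $\mathcal F'$-colorings and proper $k$-colorings of $\mathcal H-e$ as coming from a vertexwise renaming of colors, i.e.\ $\mathcal F'$ is a relabelled natural cover. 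I would state this reading explicitly, since it is what makes the bijection compatible with prescribing colors on $e$.

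\emph{Step 1: local counts.} For a map $\psi:e\to\brk{k}$, let $N(\psi)$ be the number of proper $k$-colorings of $\mathcal H-e$ that agree with $\psi$ on $e$. Because the vertices of $e$ other than $u,v$ lie in distinct components, and because the chromatic count of a component is symmetric under permuting colors, $N(\psi)$ depends only on whether $\psi(u)=\psi(v)$. Call the two values $Q_=$ and $Q_{\neq}$. Summing over all $\psi$ gives
\[
P(\mathcal H-e,k)=k^{n_e-1}Q_= + k^{n_e-1}(k-1)Q_{\neq}.
\]
Removing the $k$ monochromatic maps on $e$ gives $P(\mathcal H,k)=P(\mathcal H-e,k)-kQ_=$. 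Solving these two equations yields $Q_=$ and $Q_{\neq}$ in terms of $P(\mathcal H,k)$ and $P(\mathcal H-e,k)$.

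\emph{Step 2: reduce $P_{DP}(\mathcal H,\mathcal F)$ to a choice of types.} The maps in $\mathcal F_e$ are pairwise disjoint, so no coloring contains two of them. Transporting through the bijection, each $\varphi\in\mathcal F_e$ removes exactly $Q_=$ or $Q_{\neq}$ colorings, according to whether its image has equal colors at $u,v$. Hence, if $j$ maps are of the equal type and $t$ of the unequal type with $j+t\le k$,
\[
P_{DP}(\mathcal H,\mathcal F)=P(\mathcal H-e,k)-jQ_=-tQ_{\neq}.
\]
Since $Q_=,Q_{\neq}\ge 0$, this is at least the same expression with $t=k-j$. The resulting expression is linear in $j$, so it is minimized at $j=k$ or at $j=0$.

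\emph{Step 3: realize the extremes and finish.} Define $\mathcal F^*$ by replacing $\mathcal F_e$ with $k$ pairwise disjoint maps of a single type, whichever type gives the smaller value.
\begin{itemize}
\item For the equal type take $\varphi_i\equiv i$ in the relabelled colors. This gives the value $P(\mathcal H,k)$.
\item For the unequal type take $\varphi_i(v)=i+1 \pmod k$ and $\varphi_i(w)=i$ for all $w\neq v$. This needs $k\ge2$; the case $k=1$ is trivial. Substituting $Q_{\neq}$ and simplifying gives
\[
P(\mathcal H-e,k)-kQ_{\neq}=\frac{(k^{n_e-1}-1)P(\mathcal H-e,k)-k^{n_e-2}P(\mathcal H,k)}{k^{n_e-2}(k-1)}.
\]
\end{itemize}
Then $P_{DP}(\mathcal H,\mathcal F)\ge P_{DP}(\mathcal H,\mathcal F^*)$, and the latter equals the stated minimum.

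The main obstacle is Step 1 together with transporting counts through the bijection in Step 2. One must justify that prescribing the values on $e$ commutes with the bijection, and that the number of extensions depends only on the equality pattern at $u,v$. This is exactly where the structural hypothesis $c(\mathcal H-e)=n_e-1$ and the vertexwise-renaming nature of the bijection are used.
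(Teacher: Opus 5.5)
The paper does not prove this lemma; it is quoted from \cite{CWD25}, so there is no in-text argument to compare against. Your proof is correct and is the hypergraph analogue of the canonical-labelling argument for graphs in \cite{KM21}. The relations $P(\mathcal H-e,k)=k^{n_e-1}Q_=+k^{n_e-1}(k-1)Q_{\neq}$ and $P(\mathcal H,k)=P(\mathcal H-e,k)-kQ_=$ are right, and the colorings containing distinct members of $\mathcal F_e$ are disjoint. The bound is linear in the number $j$ of equal-type maps, and the extreme $j=0$ does simplify to $\frac{(k^{n_e-1}-1)P(\mathcal H-e,k)-k^{n_e-2}P(\mathcal H,k)}{k^{n_e-2}(k-1)}$. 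Your reading of the hypothesis as a vertexwise relabelling of the natural cover is the one the paper uses: in Theorem~\ref{prop1.1}, $\mathcal F'$ is simply the natural cover. Some such reading is needed, since a bare equicardinality says nothing about how colorings distribute over the values on $e$.

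Two side remarks need correcting.

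First, connectedness cannot be removed by ``arguing componentwise''. For disconnected $\mathcal H$, the literal hypothesis $c(\mathcal H-e)=n_e-1$ no longer forces the one-pair-plus-singletons structure, and the statement can fail. Take $\mathcal H$ with edges $e=\{a,b,c\}$, $\{a,b\}$, $\{b,c\}$, $\{x,y\}$. Let $\mathcal F'$ be the natural cover of $\mathcal H-e$, and let $\mathcal F_e=\{\{(a,i),(b,i+1),(c,i)\}: i\in\brk{k}\}$ with indices mod $k$ and $k\ge 2$.
\begin{itemize}
\item Here $P(\mathcal H,k)=P(\mathcal H-e,k)=k^2(k-1)^3$.
\item The claimed minimum is therefore $k(k-1)^2(k^2-k-1)$.
\item But $P_{DP}(\mathcal H,\mathcal F)=k^2(k-1)^3-k^2(k-1)$, which is smaller by exactly $k(k-1)$.
\end{itemize}
For disconnected $\mathcal H$ the right hypothesis is $c(\mathcal H-e)=c(\mathcal H)+n_e-2$. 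Under it your argument goes through unchanged, because the components not meeting $e$ contribute a common factor to every count.

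Second, for $k=1$ the second expression has denominator $0$. So the statement should be read for $k\ge 2$; the $k=1$ case is not ``trivially true''.
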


\section{Impact by even 
	shortest cycles}

 In this section, we consider the impact of the minimum cycle length and even cycle length on the DP color function of a hypergraph. 
 We will show that for a hypergraph $\hyh$, 
 $P_{DP}(\hyh,k)\infl P(\hyh,k)$ 
 holds under any of the following conditions:
 \begin{enumerate}[(i)]
 	\item $\hyh$ is uniform and  
 	linear, and $g(\hyh)$ is even; or 
 	
 	\item there exists an edge $e$ in $\hyh$ such that $c(H-e)=|e|-1$ and $\ell(e)$ is even. 
 \end{enumerate}

\subsection{When $g(\hyh)$ is even}

Let's first establish
the following result 
by applying Lemma \ref{dohmen}.

\begin{lemma}\label{1-1}
	Let $ \mathcal{H}=(V,E)$ be a connected linear $r$-uniform hypergraph with $|V|=n$ and $|E|=m$.
Let $g(\hyh)=z$ and let $t$ be the number of cycles of length $z$ in $\mathcal{H}$. 
	Then 
		\equ{eq1}
	{
	P(\mathcal{H}, k) =f(k)+
	(-1)^{z}tk^{n-z(r-1)+1}
	+ \sum_{i=0}^{z-1}(-1)^{i}
	{m \choose i}
	k^{n-i(r-1)},
}
where 
$f(k)$ is a polynomial in $k$ 
of degree $n-z(r-1)$.
\end{lemma}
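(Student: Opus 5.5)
We will expand $P(\mathcal{H},k)$ with Lemma~\ref{dohmen} and sort the edge subsets $S\subseteq E$ by the exponent $n-n(S)+c(S)$. Here $n(S)$ is the number of vertices covered by the edges of $S$, and the vertices outside $V(S)$ count as isolated vertices. The key quantity is the ``rank'' $\rho(S)=n(S)-c(S)$, so the exponent equals $n-\rho(S)$.

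\textbf{Step 1: $\rho(S)\le |S|(r-1)$.} Add the edges of $S$ one at a time. Each new edge has $r$ vertices, so it increases $n(S)$ by $a$ new vertices and merges the components met by its $r-a$ old vertices. Hence $\rho$ grows by at most $a+(r-a)-1=r-1$. Equality holds exactly when the $r-a$ old vertices lie in distinct components at that moment. Therefore $\rho(S)=|S|(r-1)$ iff $S$ contains no cycle, i.e.\ $S$ is a Berge-acyclic family (a hyperforest). Here linearity is what makes two edges sharing two vertices impossible; such a pair would be a $2$-cycle in the paper's definition, and linearity rules it out.

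\textbf{Step 2: the terms with $|S|\le z-1$.} Any cycle in $S$ has length at least $z$, so every $S$ with $|S|\le z-1$ is acyclic. Each such $S$ contributes $(-1)^{|S|}k^{n-|S|(r-1)}$. Summing over them gives $\sum_{i=0}^{z-1}(-1)^i\binom{m}{i}k^{n-i(r-1)}$.

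\textbf{Step 3: the remaining terms.} For $|S|=i\ge z$ we have $\rho(S)\ge \rho_{\min}$ over the subsets of size $i$, and the exponent is $n-\rho(S)$.

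For $|S|=z$: if $S$ is acyclic the exponent is $n-z(r-1)$, and these terms go into $f$. If $S$ contains a cycle, that cycle uses at least $z$ edges, so $S$ is exactly a cycle of length $z$. We must then show $\rho(S)=z(r-1)-1$, giving exponent $n-z(r-1)+1$. This follows from the incremental count: adding the edges in cyclic order, the first $z-1$ edges form a Berge path, which is acyclic. The last edge meets the path in at least the two vertices $v_{z-1},v_z$, which lie in the same component, so the step loses exactly one. We must also check that it does not meet the path in a third vertex. It cannot: a third intersection would create a shorter cycle, and linearity excludes a double intersection with a single edge. So the $t$ cycles together contribute $(-1)^z t k^{n-z(r-1)+1}$.

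For $|S|=i>z$: we need exponent $\le n-z(r-1)$, i.e.\ $\rho(S)\ge z(r-1)$. This is the main obstacle, since $\rho$ could drop by more than one per edge. The bound I will use is that each added edge increases $\rho$ by at least $1$: the edge has $r\ge2$ vertices, and either one of them is new or the edge merges two components, unless all $r$ vertices already lie in a single component. To handle that case, order the edges of $S$ so that those of a spanning hyperforest come first. Precisely, choose a maximal acyclic subfamily $T\subseteq S$. If $|T|\ge z$, then already $\rho(S)\ge\rho(T)=|T|(r-1)\ge z(r-1)$, using that $\rho$ is monotone under adding edges (it never decreases). If $|T|\le z-1$, every other edge of $S$ creates a cycle with $T$. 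I would argue that since $\mathcal{H}$ is connected, linear and has girth $z$, the subfamily $T$ plus one further edge $e$ contains a cycle of length at least $z$. That cycle involves at least $z-1$ edges of $T$, so $|T|=z-1$. Then $T\cup\{e\}$ contains a $z$-cycle, and with $|T|=z-1$ it is exactly that cycle. A further edge $e'$ must then also close a cycle within $T\cup\{e\}$ using all the vertices of that cycle. I expect this to force $e'$ to share two vertices with some edge or to create a cycle shorter than $z$, a contradiction, so this case cannot occur when $i>z$. If this rigidity argument gets messy, the fallback is a submodularity bound on $\rho$: it is the rank function of the hypergraphic matroid and is submodular. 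Then $\rho(S)\ge\rho(C)+1=z(r-1)$ for $S$ strictly containing a $z$-cycle $C$, because the extra edge adds rank at least $1$ unless it lies within the vertex set of $C$, which linearity and girth forbid for $z\ge3$. The cases $z=2$ are excluded by linearity.

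Collecting all terms of exponent at most $n-z(r-1)$ into $f(k)$ gives \eqref{eq1}.
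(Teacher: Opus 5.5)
Your skeleton matches the paper's: Dohmen's expansion split into $|S|\le z-1$, the $z$-cycles, and the rest. Steps 1, 2 and the case $|S|=z$ are sound. You are also right that for $|S|>z$ one only needs $\rho(S)\ge z(r-1)$; the paper instead asserts $n(S)\ge c(S)+(r-1)z+1$ there, which already fails for a $z$-cycle plus a pendant edge in a graph.

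\textbf{The gap.} It lies in the case $|S|>z$ where $S$ strictly contains a $z$-cycle $C$. You claim that linearity and girth forbid a further edge $e'\subseteq V(C)$ for every $z\ge 3$. This is false when $z=r=3$. Take the Pasch configuration, with lines $\{1,2,3\},\{1,4,5\},\{2,4,6\},\{3,5,6\}$; it is connected, linear, $3$-uniform, of girth $3$, with $t=4$. Any three lines form a triangle whose vertex set contains the fourth line, so the full edge set has $n(S)-c(S)=5=z(r-1)-1$. Lemma~\ref{dohmen} then gives $P(\mathcal{H},k)=k^6-4k^4+6k^2-3k$. The coefficient of $k^{n-z(r-1)+1}=k$ is $-3\ne -t$, and $f(k)=k$ is not constant. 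So the statement as given is false in general for $z=r=3$, and no argument can close this step without an extra hypothesis.

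Your primary ``rigidity'' route also fails as formulated. With an inclusion-maximal $T$, the configuration you hope to exclude does occur at $z=4$. Take $e_i=\{v_{i-1},x_i,v_i\}$ for $i=1,\dots,4$ (with $v_0=v_4$) and $e'=\{x_2,x_4,y\}$. This is linear of girth $4$, and $T=\{e_2,e_3,e_4\}$ closes a $4$-cycle with both $e_1$ and $e'$, with no double intersection and no shorter cycle.

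\textbf{What can be salvaged.} Your method does prove the lemma for $z\ge4$ and for $z=3$, $r\ne3$, once you supply the missing chord lemma. The case $z\ge 4$ is all that Theorem~\ref{Gir-1} needs, since linearity forces $z\ge3$, so even girth means $z\ge 4$. The chord lemma goes as follows.
\begin{enumerate}[(i)]
\item In a shortest cycle $C$ of a linear hypergraph, consecutive edges meet only in their joint vertex, and nonconsecutive edges are disjoint.
\item Suppose $e'\notin C$ contains distinct $x,y\in V(C)$. By linearity they lie in no common edge of $C$. Then $e'$ together with either arc of $C$ from $x$ to $y$ is a cycle of length at least $3$, hence at least $z$.
\item The two lengths sum to $z+4$, $z+3$ or $z+2$ according as two, one or none of $x,y$ are non-joint vertices.
\item Consequently $|e'\cap V(C)|\le 1$ for $z\ge5$. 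For $z=4$, $e'\cap V(C)$ is a single vertex or two non-joint vertices. For $z=3$, linearity gives at most three vertices, and never two joints.
\item Since edges of $C$ have no non-joint vertices when $r=2$, this yields $e'\not\subseteq V(C)$ unless $z=r=3$.
\end{enumerate}
Hence $\rho(C\cup\{e'\})\ge\rho(C)+1=z(r-1)$, and monotonicity of $\rho$ gives $\rho(S)\ge z(r-1)$. Submodularity is not needed; note also that $\rho$ is a polymatroid rank function rather than a matroid one when $r\ge3$.
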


\begin{proof}
Let $\sets_0$ be the set of $S\subseteq \hyh$ with $m(S)\le z-1$, 
$\sets_1$ be the set of $S\subseteq \hyh$ 
with $m(S)=z$ and $S$ is a cycle,
and $\sets_2$ be the set of 
$S\subseteq \hyh$ 
with $S\notin \sets_0\cup \sets_1$.	
	By Lemma \ref{dohmen}, we know that
	\eqn{eq2}
	{
		P(\mathcal{H},k) 
		= \sum\limits_{S \subseteq \mathcal{H}} (-1)^{m(S)}k^{n(\mathcal{H})-n(S)+c(S)}
		=\sum_{s=0}^2
	\sum\limits_{S\in \sets_s} (-1)^{m(S)}k^{n(\mathcal{H})-n(S)+c(S)}.
	}
For any $S\in \sets_0$,
we have $m(S)<z$, implying that 
$S$ is acyclic.
Assume that  $m(S) =i$.
Then $0\le i \le z-1$
and $n(S)=(r-1)i+c(S)$, implying that 
	$$
	n(\hyh)-n(S)+c(S)=n-i(r-1).
	$$
	Clearly, 
	for $0\le i\le z-1$, 
	there are exactly ${m\choose i}$ elements $S\in \sets_0$ 
 with $m(S)=i$.
	
By assumption, $|\sets_1|=t$.	For any $S\in \sets_1$, 
	$m(S)=z$ and $S$ is a cycle,
	and thus
	 $n(S)=(r-1)z$ and $c(S)=1$, implying that 
	   	$$
	   n(\hyh)-n(S)+c(S)=n-(r-1)z +1.
	   $$

For any $S\in \sets_2$, 
if $m(S)=z$, then 
$S$ is not a cycle,
and thus  $n(S)=c(S)+(r-1)z$,
implying that 
$$
 n(\hyh)-n(S)+c(S)
 =n-(r-1)z.
$$

For any $S\in \sets_2$, 
if $m(S)\ne z$,
then $m(S)>z$.
In this case, it can be shown that 
$n(S)\ge c(S)+(r-1)z+1$,
implying that 
$$
n(\hyh)-n(S)+c(S)
\le n-(r-1)z-1.
$$
Thus, $f(k)=
\sum\limits_{S\in \sets_2} (-1)^{m(S)}k^{n(\mathcal{H})-n(S)+c(S)}
$ is a polynomial of degree $n-(r-1)z$.

Hence 
 (\ref{eq1}) follows directly from (\ref{eq2}).
\end{proof}

By applying Lemma~\ref{1-1}, 
we establish the following result, which partially answers Problem \ref{Pro-1}.

\begin{theorem}\label{Gir-1}
Let $ \mathcal{H}$ be an $r$-uniform linear hypergraph.
If $g(\hyh)$ is even,
then $P_{DP}(\mathcal{H},k) 
\infl P(\mathcal{H},k)$.
\end{theorem}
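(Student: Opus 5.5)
Our approach is to compare the upper bound of Lemma~\ref{CWD} with the expansion of $P(\hyh,k)$ in Lemma~\ref{1-1}, coefficient by coefficient, down to degree $n-z(r-1)+1$, where $z=g(\hyh)$ is even.

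We first reduce to the connected case. Both $P$ and $P_{DP}$ are multiplicative over components. For $P_{DP}$ this holds because a $k$-fold cover splits into covers of the components, so we can minimise on each component separately. Choose a component $\hyh_1$ that contains a cycle of length $z$. Every component has $P(\hyh_i,k)>0$ and $P_{DP}(\hyh_i,k)\le P(\hyh_i,k)$ for large $k$. It therefore suffices to prove $P_{DP}(\hyh_1,k)\infl P(\hyh_1,k)$. The girth of $\hyh_1$ is still $z$, and $\hyh_1$ is still linear and $r$-uniform. So we may assume $\hyh$ is connected, with $n$ vertices and $m$ edges, and that $t\ge 1$ is the number of cycles of length $z$.

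Next we expand the bound of Lemma~\ref{CWD} using the binomial theorem:
\[
k^{n-(r-1)m}(k^{r-1}-1)^m=\sum_{i=0}^{m}(-1)^i{m\choose i}k^{n-i(r-1)} .
\]
The terms with $i\le z-1$ coincide exactly with the sum $\sum_{i=0}^{z-1}(-1)^i{m\choose i}k^{n-i(r-1)}$ in Lemma~\ref{1-1}. Every term with $i\ge z$ has degree at most $n-z(r-1)$. Subtracting, we obtain
\[
P(\hyh,k)-k^{n-(r-1)m}(k^{r-1}-1)^m=(-1)^z t\,k^{n-z(r-1)+1}+g(k),
\]
where $g(k)$ is a polynomial of degree at most $n-z(r-1)$; it collects $f(k)$ and the tail of the binomial sum. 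Since $z$ is even and $t\ge 1$, the leading term is $t\,k^{n-z(r-1)+1}$ with positive coefficient. Hence the difference is positive for all sufficiently large $k$. Combined with Lemma~\ref{CWD}, this gives $P_{DP}(\hyh,k)\le k^{n-(r-1)m}(k^{r-1}-1)^m<P(\hyh,k)$ for all $k\ge N$, for a suitable $N$.

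The main point needing care is the degree bookkeeping. The term $(-1)^z t\,k^{n-z(r-1)+1}$ has degree strictly larger than every other leftover term. This relies on the fact, used in Lemma~\ref{1-1}, that $f$ has degree at most $n-z(r-1)$. Linearity matters here: cycles of length $z$ have $n(S)=(r-1)z$, and the non-cycle $z$-edge subsets contribute only to degree $n-(r-1)z$. We take Lemma~\ref{1-1} as given. Beyond that, the only remaining check is that the component reduction keeps $P(\hyh_i,k)>0$ for large $k$, which is immediate because each $P(\hyh_i,k)$ is monic of degree $n(\hyh_i)$.
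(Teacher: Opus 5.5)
Your proposal is correct and is essentially the paper's proof. Both bound $P_{DP}$ by Lemma~\ref{CWD}, expand binomially, cancel the terms with $i\le z-1$ against Lemma~\ref{1-1}, and use that the surviving leading term $t\,k^{n-z(r-1)+1}$ is positive for even $z$; your multiplicativity argument over components supplies the reduction to the connected case, which the paper only asserts.

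One caution about taking Lemma~\ref{1-1} as given: its degree bound on $f$ is false in general. In the Pasch configuration (four $3$-edges on six vertices, meeting pairwise in one point, so $z=r=3$), the full $4$-edge set contributes a term of degree $n-z(r-1)+1$. The bound does hold when $z\ge 4$, because then no edge outside a shortest cycle can have all its vertices on that cycle, so it is safe in your even-girth setting.
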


\begin{proof}
Without loss of generality, let $\mathcal{H}$ be a connected hypergraph with $n$ vertices and $m$ edges. By Lemma \ref{CWD}, we know that
\equ{eq3}
{
P(\mathcal{H},k) - P_{DP}(\mathcal{H},k)
\ge 
	P(\mathcal{H},k) -
	k^{n-(r-1)m}(k^{r-1}-1)^{m}.
}
Suppose that 
$$
P(\mathcal{H}, k) = \sum^{m}_{i=0}
(-1)^{i}a_{i}k^{n-i(r-1)}
$$ 
and $t$ is the number of cycles
in $\hyh$ of length $z$.
Let $z=g(\hyh)$.
Applying Lemma \ref{1-1} and the binomial theorem, 
\begin{eqnarray}\label{eq3}
& & P(\mathcal{H},k) - k^{n-(r-1)m}(k^{r-1}-1)^{m}
	\nonumber\\ 
	& = & \sum_{i=0}^{z-1}(-1)^{i}a_{i}k^{n-i(r-1)}+(-1)^{z}tk^{n-z(r-1)+1}+f(x)-\sum_{i=0}^{m}(-1)^{i}\binom{m}{i} k^{n-i(r-1)} 
	\nonumber \\
	& = & (-1)^{z}tk^{n-z(r-1)+1} + f(x) -\sum_{i=z}^{m}(-1)^{i}\binom{m}{i}m^{n-i(r-1)},
\end{eqnarray}
where $f(k)$ is introduced in 
Lemma~\ref{1-1}.
Clearly, $tk^{n-z(r-1)+1}$ is the dominant of the last expression 
in (\ref{eq3}).
Since $z$ is even, $(-1)^z=1$
and thus 
there exists $N \in \mathbb{N}$ such that 
for all $k\ge N$, 
$$P(\mathcal{H},k) - k^{n-m(r-1)}(k^{r-1}-1)^{m} > 0.$$ 
Hence, by (\ref{eq3}),
$ P(\mathcal{H},k) - P_{DP}(\mathcal{H},k)>0 
$ holds for all  $k\ge N$.
\end{proof}

\subsection{$\ell(e)$ is even 
for some edge $e$}

\begin{theorem} \label{prop1.1}
	Let $\mathcal H$ be a hypergraph. 
If there exists 
	an edge $e$ 
	in $\hyh$ with $|e|=n_e\ge 2$
	such that 
$c(\mathcal{H}-e)=n_e-1$ and  
	 $P(\mathcal{H}-e, k) < \frac{k^{n_e-1}}{k^{n_e-1}-1} P(\mathcal{H},k)$,
  then  
		$$
	P_{DP}(\mathcal H,k)<P(\mathcal H,k).
	$$

\end{theorem}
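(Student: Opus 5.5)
Let $e$ be the edge in the statement and put $\mathcal{H}' = \mathcal{H}-e$. The plan is to apply Lemma~\ref{CWD-1} to one specific cover, the one built from the natural cover of $\mathcal{H}'$. Let $\mathcal{F}$ be the $k$-fold cover of $\mathcal{H}$ given by $\iota_{\mathcal{H}'}(k)$ together with the $k$ maps $\varphi_i^{(e)}: e\mapsto\{i\}$, $i\in\brk{k}$; that is, $\mathcal{F}=\iota_{\mathcal{H}}(k)$. Then $\mathcal{F}'=\mathcal{F}-\mathcal{F}_e=\iota_{\mathcal{H}'}(k)$. By the bijection recorded after the definition of the natural cover, the $\mathcal{F}'$-colorings of $\mathcal{H}'$ correspond exactly to the proper $k$-colorings of $\mathcal{H}'$. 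Since $c(\mathcal{H}')=n_e-1$ and $|e|=n_e\ge 2$, all hypotheses of Lemma~\ref{CWD-1} hold. It yields a $k$-fold cover $\mathcal{F}^*$ of $\mathcal{H}$ with
$$P_{DP}(\mathcal{H},\mathcal{F}^*)=\min\left\{P(\mathcal{H},k),\ \frac{(k^{n_e-1}-1)P(\mathcal{H}',k)-k^{n_e-2}P(\mathcal{H},k)}{k^{n_e-2}(k-1)}\right\}.$$

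Since $P_{DP}(\mathcal{H},k)\le P_{DP}(\mathcal{H},\mathcal{F}^*)$, it suffices to show that the second term of the minimum is strictly less than $P(\mathcal{H},k)$. The denominator $k^{n_e-2}(k-1)$ is positive for $k\ge 2$, so this inequality is equivalent to
$$(k^{n_e-1}-1)P(\mathcal{H}',k) < k^{n_e-2}(k-1)P(\mathcal{H},k)+k^{n_e-2}P(\mathcal{H},k)=k^{n_e-1}P(\mathcal{H},k).$$
Dividing by $k^{n_e-1}-1>0$, this is precisely the hypothesis $P(\mathcal{H}',k)<\frac{k^{n_e-1}}{k^{n_e-1}-1}P(\mathcal{H},k)$. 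Hence $P_{DP}(\mathcal{H},k)\le P_{DP}(\mathcal{H},\mathcal{F}^*)<P(\mathcal{H},k)$.

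The only delicate point is the value $k=1$, where both the denominator $k^{n_e-2}(k-1)$ and the factor $k^{n_e-1}-1$ vanish. The hypothesis is meaningless there, or at best can only be read for $k\ge 2$, so I would state that the conclusion holds for every integer $k\ge 2$ at which the hypothesis holds. Apart from this, the argument is a direct substitution into Lemma~\ref{CWD-1}, and I expect the main step to be confirming that the natural cover meets its bijection hypothesis, which is immediate from the remark following the definition of $\iota_{\mathcal H}(k)$.
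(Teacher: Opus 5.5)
Your proof is correct and follows the paper's own argument. Both apply Lemma~\ref{CWD-1} to a cover whose restriction to $\mathcal{H}-e$ is in bijection with the proper colorings, then note that for $k\ge 2$ the hypothesis is equivalent to the second term of the minimum being smaller than $P(\mathcal{H},k)$. Your explicit choice $\mathcal{F}=\iota_{\mathcal{H}}(k)$ and your remark excluding $k=1$ make precise two points the paper leaves implicit.
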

\begin{proof}
	Let ${\cal H}'={\mathcal H}-e$. There is a bijection between the $ \mathcal {F}'$-coloring  of $\mathcal {H}'$ and its proper $k$-coloring.
	So, Lemma \ref{CWD-1} implies that there exists an $k$-fold cover $\mathcal F$ of $\mathcal H$ such that
\equ{eq4}
{
P_{DP}(\mathcal H, \mathcal F)=\min \left\{ P(\mathcal{H},k), \frac{(k^{n_e-1}-1)P(\mathcal{H}-e,k)-k^{n_e-2}P(\mathcal{H},k)}{k^{n_e-2}(k-1)} \right\}.
}
	Since 
	$$
		P(\mathcal{H}-e, k) < \frac{k^{n_e-1}}{k^{n_e-1}-1} P(\mathcal{H},k),
		$$
	it follows that 
	\begin{align}			\label{equ:2-2}
	\frac{(k^{n_e-1}-1)P(\mathcal{H}-e,k)-k^{n_e-2}P(\mathcal{H},k)}{k^{n_e-2}(k-1)}<P(\mathcal H,k).	
	\end{align}
	By combining 
	(\ref{eq4}) and (\ref{equ:2-2}),
	we have
	$$
	P_{DP}(\mathcal H,k) \leq P_{DP}(\mathcal H,\mathcal F)=\frac{(k^{n_e-1}-1)P(\mathcal{H}-e,k)-k^{n_e-2}P(\mathcal{H},k)}{k^{n_e-2}(k-1)}<P(\mathcal H,k).
$$
	Thus the result is deduced.
\end{proof}

In order to prove the following theorem, which is a partial answer to Problem \ref{Pro-3}, we conclude the following result.

\begin{lemma}\label{lemma 9}
	Suppose that $\mathcal{H}=(V,E)$ is a  hypergraph
	and $e=\{v_i: i \in \brk{n_e}\}$ is an edge in $\mathcal{H}$ with $c(\mathcal{H}-e) = n_e-1$. Then the leading term of the polynomial $P(\mathcal{H}-e,k) -\frac{k^{n_e-1}}{k^{n_e-1}-1}P(\mathcal{H}, k)$ is 
	 $\frac{1}{k^{n_e-1}-1} \sum_{S \in {\mathcal E} }(-1)^{m(S)}k^{n(\mathcal{H})-n(S)+c(S)}$, 
	where $\mathcal E$ is the set of subsets $S$ of $E\setminus \{e\}$ such that  
	$v_1$ and $v_2$ are in the same component of 
	the spanning subhypergraph $\hyh\langle S \rangle$  of $\mathcal{H}$.
\end{lemma}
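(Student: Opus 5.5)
The plan is to compute the polynomial exactly with Lemma~\ref{dohmen}, splitting every subset of $E$ according to whether it contains $e$, and then read off the leading behaviour. For a partial hypergraph $S$ write $c^*(S)=n(\hyh)-n(S)+c(S)$. This is the number of components of the spanning subhypergraph $\hyh\langle S\rangle$, counting isolated vertices. Then Lemma~\ref{dohmen} gives
\begin{align*}
P(\hyh-e,k)&=\sum_{S\subseteq E\setminus\{e\}}(-1)^{m(S)}k^{c^*(S)},\\
P(\hyh,k)&=\sum_{S\subseteq E\setminus\{e\}}(-1)^{m(S)}\bigl(k^{c^*(S)}-k^{c^*(S\cup\{e\})}\bigr).
\end{align*}

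The key combinatorial step is to compare $c^*(S\cup\{e\})$ with $c^*(S)$ using the hypothesis $c(\hyh-e)=n_e-1$. Since $e$ has $n_e$ vertices spread over only $n_e-1$ components of $\hyh-e$, after relabelling exactly two of its vertices, $v_1$ and $v_2$, lie in a common component of $\hyh-e$. All the other $v_i$ lie in distinct components, different from that one. For $S\subseteq E\setminus\{e\}$, vertices in different components of $\hyh-e$ remain in different components of $\hyh\langle S\rangle$. Hence adding $e$ to $S$ merges $n_e-1$ components of $\hyh\langle S\rangle$ into one if $S\in\sete$, and merges $n_e$ components into one otherwise. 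Thus $c^*(S\cup\{e\})=c^*(S)-(n_e-2)$ for $S\in\sete$, and $c^*(S\cup\{e\})=c^*(S)-(n_e-1)$ for $S\notin\sete$.

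Next I substitute this into
\[
(k^{n_e-1}-1)P(\hyh-e,k)-k^{n_e-1}P(\hyh,k)
\]
and compute term by term. For $S\notin\sete$ the contribution is
\[
(-1)^{m(S)}\bigl[(k^{n_e-1}-1)k^{c^*}-k^{n_e-1}k^{c^*}+k^{n_e-1}k^{c^*-n_e+1}\bigr]=0,
\]
so these terms cancel. For $S\in\sete$ the contribution is
\[
(-1)^{m(S)}\bigl(-k^{c^*}+k^{c^*+1}\bigr)=(k-1)(-1)^{m(S)}k^{c^*(S)}.
\]
Dividing by $k^{n_e-1}-1$ gives the exact identity
\[
P(\hyh-e,k)-\frac{k^{n_e-1}}{k^{n_e-1}-1}P(\hyh,k)=\frac{k-1}{k^{n_e-1}-1}\sum_{S\in\sete}(-1)^{m(S)}k^{n(\hyh)-n(S)+c(S)}.
\]

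Finally I extract the leading behaviour. The factor $k-1$ only shifts the degree by one, keeps the leading coefficient and keeps the sign for large $k$. So the dominant part, and in particular the sign for large $k$ that is needed when applying Theorem~\ref{prop1.1}, is governed by $\frac{1}{k^{n_e-1}-1}\sum_{S\in\sete}(-1)^{m(S)}k^{n(\hyh)-n(S)+c(S)}$, which is the statement, read up to this factor. I expect the main obstacle to be the component-counting step: one must check carefully that no $S\subseteq E\setminus\{e\}$ can join $v_1$ to any $v_i$ with $i\ge3$, or join two of $v_3,\dots,v_{n_e}$. This follows because $\hyh\langle S\rangle$ is a subhypergraph of $\hyh-e$, so its components refine those of $\hyh-e$.
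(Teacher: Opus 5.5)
Your proof is correct and is essentially the paper's argument. The paper first uses deletion--contraction to write the expression as $\frac{1}{k^{n_e-1}-1}\bigl(k^{n_e-1}P(\hyh/e,k)-P(\hyh-e,k)\bigr)$. It then compares, for each $S\subseteq E\setminus\{e\}$, the Dohmen exponents in $\hyh$ and in $\hyh/e$ according to whether $S\in\sete$. That is exactly your comparison of $c^*(S)$ with $c^*(S\cup\{e\})$. Both arguments tacitly take $\hyh$ connected, so that every component of $\hyh-e$ meets $e$.

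Your bookkeeping is the accurate one. The exact identity carries the factor $k-1$, which the paper's final display drops: for the triangle with $n_e=2$ the left-hand side equals $k$, whereas the paper's formula gives $k/(k-1)$. So the lemma as stated holds only in your sense, up to the positive factor $k-1$, and that is all the sign argument in Theorem~\ref{EvenCyc} needs.
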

\begin{proof}
	 Suppose that $v_1, v_2 \in e$ are in the same component of $\mathcal{H}-e$.
	 Then $\sete\ne \emptyset$.
	 
	 For any $S\subseteq E\setminus \{e\}$, 
	 let 
	$\zeta(S) =| V(S) \cap e|$ and $\eta(S) = |V(S) \cap \{v_1, v_2\}|$. 
	If either $\zeta(S) < 2$ or 
	$\zeta(S) \ge 2$ and $\eta(S) \le 1$, then 
	$$n_{\mathcal{H}}(S)=n_{\mathcal{H}/e}(S)\mbox{ and }  c_{\mathcal{H}}(S)=c_{\mathcal{H}/e}(S).$$
	
	 By the assumption,  
	 $\mathcal E$ be the set of all subsets $S$ of $E \setminus \{e\}$ such that  $v_1,v_2$ are in the same component of the spanning subhypergraph $\hyh\langle S \rangle$  of $\mathcal{H}$. 
	 Let $\mathcal E'$ be the set $S$ of subsets of $E \setminus \{e\}$ with $S \notin \mathcal E $. 
	 For any $S\subseteq E\setminus \{e\}$, 
	 if $\zeta(S) \ge 2$ and 
	 $\eta (S)= 2$, then
	$$\left\{
	\begin{aligned}
	   &n_{\mathcal{H}}(S)=n_{\mathcal{H}/e}(S)+\zeta(S) -2,\  c_{\mathcal{H}}(S)=c_{\mathcal{H}/e}(S)+\zeta(S) -1 , \qquad S \in \mathcal E; \\ &n_{\mathcal{H}}(S)=n_{\mathcal{H}/e}(S)+\zeta(S) -1,\  c_{\mathcal{H}}(S)=c_{\mathcal{H}/e}(S)+\zeta(S) -1 , \qquad S \in \mathcal E'.
	\end{aligned}
	\right.
	$$
	Thus
    \begin{align}
    	\nonumber
    	P(\mathcal{H}-e,k) -\frac{k^{n_e-1}}{k^{n_e-1}-1}P(\mathcal{H}, k) &= \frac{1}{k^{n_e-1}-1}(k^{n_e-1}P(\mathcal{H}/e, k) - P(\mathcal{H}-e,k)) \\
    	\label{cycle-subtraction} \nonumber
    	&=\frac{1}{k^{n_e-1}-1} \sum_{S \in \sete}(-1)^{m(S)}
    	k^{n(\mathcal{H})-n(S)+c(S)}.
    \end{align}    	
\end{proof}

\begin{theorem}\label{EvenCyc}
	Suppose that $\mathcal{H}=(V,E)$ is hypergraph. 
If  there exists $e\in E$ with $|e|=n_e\ge 2$
	such that $c(\mathcal{H}-e) = 
	n_e-1$ and $\ell(e)$ is even, 
then $P_{DP}(\mathcal{H},k)\infl P(\mathcal{H},k)$.
\end{theorem}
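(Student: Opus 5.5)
The plan is to reduce the statement to Theorem~\ref{prop1.1} and then use Lemma~\ref{lemma 9} to read off the sign of the relevant difference for large $k$. By Theorem~\ref{prop1.1}, it suffices to show that
\[
D(k):=P(\mathcal{H}-e,k)-\frac{k^{n_e-1}}{k^{n_e-1}-1}P(\mathcal{H},k)<0
\]
for all sufficiently large $k$. The hypothesis $c(\mathcal{H}-e)=n_e-1$ is exactly what Theorem~\ref{prop1.1} and Lemma~\ref{lemma 9} require.

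Since $\ell(e)$ is even, it is finite, so there is a cycle through $e$. The cycle condition and $c(\mathcal{H}-e)=n_e-1$ together force exactly two vertices of $e$ into a common component of $\mathcal{H}-e$, while every other vertex of $e$ lies in its own component. Label these two vertices $v_1,v_2$, as in Lemma~\ref{lemma 9}.

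By Lemma~\ref{lemma 9},
\[
D(k)=\frac{1}{k^{n_e-1}-1}\sum_{S\in\sete}(-1)^{m(S)}k^{n(\mathcal{H})-n(S)+c(S)}.
\]
So the sign of $D(k)$ for large $k$ is the sign of the leading coefficient of $Q(k):=\sum_{S\in\sete}(-1)^{m(S)}k^{n-n(S)+c(S)}$. To maximize the exponent $n-n(S)+c(S)$, we want $S$ to connect $v_1$ and $v_2$ while "wasting" as few vertices as possible; equivalently, we minimize $n(S)-c(S)$. Throwing away components of $S$ that do not contain $v_1,v_2$ only helps, so we may take $S$ connected, with $n(S)-1$ to be minimized.

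The claim is that the minimizers are exactly the minimal connecting structures, namely paths from $v_1$ to $v_2$ in $\mathcal{H}-e$. Each such path together with $e$ gives a cycle containing $e$, and the shortest paths correspond to shortest such cycles of length $\ell(e)$.

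The main obstacle is this extremal step, because $\mathcal{H}$ need not be uniform or linear. The path minimizing $n(S)$ is not necessarily the one with fewest edges, since edges have different sizes. So the argument has to be phrased in terms of minimizing $n(S)$, and the parity has to be tied correctly to $\ell(e)$. I would first try to prove the following. Any connected $S\ni v_1,v_2$ contains a minimal connecting subfamily $S_0$ (a $v_1$–$v_2$ path) with $n(S_0)\le n(S)$. Equality holds only if $S=S_0$, since adding an edge either adds a vertex or creates a redundant edge, and removing redundant edges can be handled by an inclusion–exclusion cancellation. Then I would restrict to the paths achieving the minimal vertex count and argue that they all have the same number of edges, $\ell(e)-1$, which is odd. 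Their contributions $(-1)^{\ell(e)-1}=-1$ all have the same sign and cannot cancel.

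If the uniform-size reduction fails in general, the fallback is to use the shortest cycle through $e$ directly. Among sets $S$ with $n(S)$ minimal, I would pair off the non-path ones by toggling a redundant edge, so that they cancel. This leaves the leading coefficient equal to $-(\text{number of minimal paths})<0$.

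Once the leading coefficient of $Q$ is shown to be negative, we get $D(k)<0$ for all $k\ge N$, and Theorem~\ref{prop1.1} then gives $P_{DP}(\mathcal{H},k)<P(\mathcal{H},k)$ for all $k\ge N$.
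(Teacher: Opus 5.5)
\textbf{Gap in the extremal step.} Your reduction is the one the paper uses: Theorem~\ref{prop1.1}, then Lemma~\ref{lemma 9}, then the sign of the top coefficient of $Q(k)$. You also identified the right weak spot: the exponent $n-n(S)+c(S)$ is maximized by connecting sets that cover few \emph{vertices}, not few edges. The step you propose to get past it is where the argument breaks. Nothing forces the vertex-minimal $v_1$--$v_2$ paths in $\mathcal H-e$ to have $\ell(e)-1$ edges, or even a common number of edges, because $\ell(e)$ counts edges and ignores edge sizes. For the same reason, your fallback claim that the top coefficient equals $-(\text{number of minimal paths})$ also fails. The paper's proof makes exactly this leap, from ``$m(S)$ minimum'' to ``leading term''. (Separately, the identity in Lemma~\ref{lemma 9} should read $D(k)=\frac{k-1}{k^{n_e-1}-1}\sum_{S\in\sete}(-1)^{m(S)}k^{n-n(S)+c(S)}$. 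The extra positive factor does not affect signs.)

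\textbf{The statement is false as stated, so the gap cannot be repaired.} Let $V=\{v_1,v_2,x,u,w\}$ and $E=\{e,f,g_1,g_2\}$, with $e=\{v_1,v_2,x\}$, $f=\{v_1,v_2,w\}$, $g_1=\{v_1,u\}$, $g_2=\{u,v_2\}$. Then $c(\mathcal H-e)=2=|e|-1$, and $\ell(e)=2$ via the $2$-cycle $v_1,e,v_2,f$.

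The vertex-minimal connecting sets are $\{f\}$ and $\{g_1,g_2\}$, with one and two edges, and their contributions cancel. Summing over $\{f\},\{g_1,g_2\},\{f,g_1\},\{f,g_2\},\{f,g_1,g_2\}$ gives $Q(k)=-k^3+k^3+k^2+k^2-k^2=k^2>0$. So $D(k)>0$ for every $k\ge 2$, and Theorem~\ref{prop1.1} never applies.

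Worse, $P_{DP}(\mathcal H,k)=P(\mathcal H,k)$ for all $k$. Take a perfect cover $\mathcal F$, which suffices. Rename colors at $u$ and $v_2$ so that $\mathcal F_{g_1}$ and $\mathcal F_{g_2}$ are natural. Let $A$ (resp.\ $B$) be the permutation graph $\{(\varphi(v_1),\varphi(v_2))\}$ over $\varphi\in\mathcal F_e$ (resp.\ $\varphi\in\mathcal F_f$). Counting $\mathcal F$-colorings $\gamma$ by $(a,b)=(\gamma(v_1),\gamma(v_2))$ gives
\[
P_{DP}(\mathcal H,\mathcal F)=\sum_{(a,b)\in\brk{k}^2}\bigl(k-|\{a,b\}|\bigr)\bigl(k-[(a,b)\in A]\bigr)\bigl(k-[(a,b)\in B]\bigr)
=C_k-k(d_A+d_B)+(k-2)|A\cap B|+|A\cap B\cap\Delta|.
\]
Here $\Delta$ is the diagonal of $\brk{k}^2$, $d_X=|X\cap\Delta|$, and $C_k$ does not depend on $\mathcal F$. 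Since $|A\cap B|\ge|A\cap B\cap\Delta|\ge\max\{0,d_A+d_B-k\}$, this is at least $C_k-k^2-k$. That is exactly the value at $A=B=\Delta$, i.e.\ at the natural cover.

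A correct version therefore needs a hypothesis guaranteeing two things: every $S\in\sete$ that maximizes $n-n(S)+c(S)$ has exactly $\ell(e)-1$ edges, and these terms do not cancel. This holds for graphs, where a shortest $v_1$--$v_2$ path is the only connecting set on its vertex set. With $\ell(e)$ as defined, neither your argument nor the paper's can go through.
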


\begin{proof} 
	Let $v_1$ and $v_2$ be distinct 
	vertices in $e$,
	and let $\sete$ be the 
	the set of subsets $S$ of $E\setminus \{e\}$ such that  $v_1,v_2$ are in the same component of the spanning subhypergraph $\hyh\langle S \rangle$  of $\mathcal{H}$.
	Let
	\begin{align}
		\nonumber
	A=\frac{1}{k^{n_e-1}-1}\sum_{S \in \mathcal E}(-1)^{m(S)}k^{n(\mathcal{H})-n(S)+c(S)}.
	\end{align}

	When $c(S)$ increases by $1$, $c(S)-n(S)$ increases by at least $a$ for some $a \geq 0$. Thus suppose that $S$ is connected and that $m(S)$ is minimum. 
	For any $S \in \mathcal E$, 
	$S\cup \{e\}$ contains 
	a cycle $C$ with $e\in C$,
	implying that $m(S)\ge \ell(e)-1$ 
	and $\min\limits_{S\in \sete}m(S)=\ell(e)-1$.
Since $\ell(e)$ is even, 
	$(-1)^{m(S)}=-1$ whenever 
	$m(S)=\ell(e)-1$, implying hat 
	$A<0$ when $k$ is sufficiently large.

	By Lamma \ref{lemma 9},
	$A$ is the leading term in the polynomial $P(\mathcal{H}-e, k) - \frac{k^{n_e-1}}{k^{n_e-1}-1} P(\mathcal{H},k).$	
	Then there exists $N \in \mathbb{N}$ such that for all integer $k \ge N$
	 $$P(\mathcal{H}-e, k) < \frac{k^{n_e-1}}{k^{n_e-1}-1} P(\mathcal{H},k).$$   The result can be proved by Proposition \ref{prop1.1}.
\end{proof}

\section{DP color function of $\mathcal{H} \vee K_p$} 

In this section we study the question whether taking the join of an arbitrary hypergraph with an appropriate clique makes the chromatic polynomial equal to the DP color function. Theorem \ref{Ans-3} is a partial answer to Problem \ref{Pro-3}.

It is obvious that for any hypergraph $\mathcal{H}$ 
and positive integers $p$ and $k$,
$$
P(\mathcal{H} \vee K_p, k) = \left (\prod_{i=0}^{p-1}(k-i)\right ) P(\mathcal{H}, k-p).
$$ 

The \emph{coloring number} of a hypergraph $\mathcal{H}$, denoetd by col($\mathcal{H}$), is the smallest integer $d$ for which there exists an ordering, $v_1, v_2, ..., v_n$, of the elements in $V(\mathcal{H})$ such that each vertex $v_i$ has at most $d-1$ neighbors among  $v_1, v_2, ..., v_{i-1}$ for $2\le i\le n$. Throughout this Section, assume $\mathcal{H}$ is a hypergraph with ${\rm col}(\mathcal{H}) = d \ge 3$ and that each cover is perfect unless otherwise noted.
If a cover $\mathcal F$ is not a perfect, then ${\mathcal F}_1$ is a perfect cover by adding partial maps from $\mathcal F$. It is obvious that $P_{DP}(\mathcal H,{\mathcal F}_1)\le P_{DP}(\mathcal H,\mathcal F). $

Let $\mathcal{H}=(V,E)$ be a hypergraph with 
$V=\{v_i:i\in \brk{n}\}$
and $E=\{e_{n+i}: i\in \brk{m}\}$.
Let $\setm= K_1 \vee \mathcal{H}$.
Then, $\setm$ is the hypergraph 
with vertex set $V\cup \{w\}$ 
and edge set 
$E\cup \{e_i: i\in \brk{n}\}$,
where $e_i=\{w,v_i\}$.
Consider a $k$-fold cover $$\mathcal F=\left \{
\varphi^{(i)}_j(e_i),\psi^{(i)}_j(e_{n+p}):\varphi^{(i)}_j(w)=j,i\in \brk n,p\in\brk m,j\in\brk k
\right \}.
$$ 
Let $\varPhi_j=\{ \varphi^{(i)}_j(w): i\in \brk{n}\}$ for each $j\in \brk k$. We say that $\varPhi_j$ is a \emph{level mapping}  for some $j \in \brk{k}$ if for any given $p \in \brk{m}$ there exists $q_p \in \brk{k} $ such that for each $v_i\in e_{n+p}$ $$\psi^{(p)}_{q_p}(v_i)
=\varphi^{(i)}_j(v_i).$$
For each $j\in \brk k$ let
$$
\varPsi_j = \left\{ \psi^{(p)}_{q}(e_{n+p}): \forall p \in \brk{m}, \exists q\in\brk{k} \text{ with } \psi^{(p)}_{q}(v_i)=\varphi^{(i)}_j(v_i) \right\}
$$
and let 
$$	
\mathcal{F}_j = \mathcal{F}\setminus ( \varPsi_j \cup\{\varphi^{(i)}_j(e_i):
i\in\brk n\}). 
\label{F_j}
$$

For example, consider the $3$-fold cover of $ \mathcal{H}
\vee K_1$ shown in Table \ref{tab:example} , where $\varPhi_2$ is a level mapping, but $\varPhi_1$ and $\varPhi_3$ are not level mappings.

\begin{table}[ht]
	\centering
	
	\begin{minipage}{0.5\textwidth}		
		\begin{subfigure}[t]{\textwidth}
			\centering
	\includegraphics[width=0.6\textwidth]{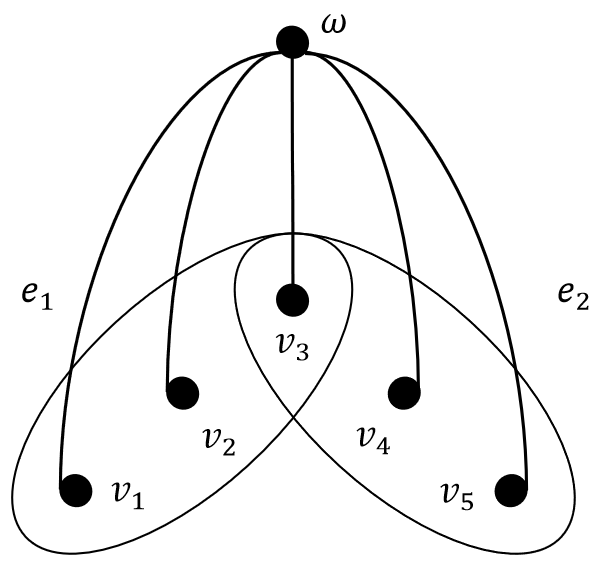}
			\caption{ $K_1 \vee \mathcal{H}$}
		\end{subfigure}
		
		\vspace{10pt}

		\begin{subfigure}[t]{\textwidth}
			\centering
			\setlength{\tabcolsep}{6pt} 
			\renewcommand{\arraystretch}{1.1} 
			\begin{tabular}{c c c c c c c}
				\hline
				& $w$ & $v_1$ & $v_2$ & $v_3$ & $v_4$ & $v_5$ \\
				\hline
				$\varphi^{(1)}_1$ & 1 & 1 &  &  &  & \\
				$\varphi^{(1)}_2$ & 2 & 2 &  &  &  & \\
				$\varphi^{(1)}_3$ & 3 & 3 &  &  &  & \\
				\hline
				$\varphi^{(2)}_1$ & 1 &  & 2 &  &  & \\
				$\varphi^{(2)}_2$ & 2 &  & 3 &  &  & \\
				$\varphi^{(2)}_3$ & 3 &  & 1 &  &  & \\
				\hline
			\end{tabular}
			
		\end{subfigure}
		\end{minipage}
		\hfill 
		\begin{minipage}{0.48\textwidth}
		\centering
		\setlength{\tabcolsep}{6pt}
		\renewcommand{\arraystretch}{1.1}
		\begin{tabular}{c c c c c c c}
			\hline
			& $w$ & $v_1$ & $v_2$ & $v_3$ & $v_4$ & $v_5$ \\
			\hline
			$\varphi^{(3)}_1$ & 1 &  &  & 3 &  & \\
			$\varphi^{(3)}_2$ & 2 &  &  & 1 &  & \\
			$\varphi^{(3)}_3$ & 3 &  &  & 2 &  & \\
			\hline
			$\varphi^{(4)}_1$ & 1 &  &  &  & 1 & \\
			$\varphi^{(4)}_2$ & 2 &  &  &  & 2 & \\
			$\varphi^{(4)}_3$ & 3 &  &  &  & 3 & \\
			\hline
			$\varphi^{(5)}_1$ & 1 &  &  &  &  & 1 \\
			$\varphi^{(5)}_2$ & 2 &  &  &  &  & 3 \\
			$\varphi^{(5)}_3$ & 3 &  &  &  &  & 2 \\
			\hline
			$\psi^{(1)}_1$ & & 1 & 2 & 3 &  &   \\
			$\psi^{(1)}_2$ & & 2 & 3 & 1 &  &   \\
			$\psi^{(1)}_3$ & & 3 & 1 & 2 &  &   \\
			\hline
			$\psi^{(2)}_1$ &  &  &  & 1 & 2 & 3  \\
			$\psi^{(2)}_2$ &  &  &  & 3 & 1 & 2  \\
			$\psi^{(2)}_3$ &  &  &  & 2 & 3 & 1  \\
			\hline
		\end{tabular}
		\end{minipage}
	
	\caption{$\mathcal{H}\vee K_1$ and a $3$-fold cover} 
	\label{tab:example}
\end{table}

\begin{lemma}\label{2-1}
	Let $\mathcal{F}$ is a $k$-fold cover of a hypergraph $\mathcal H$ and let $\varPhi=\{\varPhi_j:j\in\brk k\}$, 
	where $\varPhi_j=\{ \varphi^{(i)}_j(w): i\in \brk{n}\}$
	for each $j\in \brk{k}$.
	If $\varPhi$ has at least $k-1$ level mappings, then there is a natural bijection between the $\mathcal{F}$-colorings of $\mathcal M$ and the proper $k$-colorings of $\mathcal M$. Consequently $P_{DP}(\mathcal M, \mathcal{F}) = P(\mathcal M, k)$.
\end{lemma}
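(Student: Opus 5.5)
The plan is to show that, after a suitable relabelling of the colour classes at each vertex $v_i$, the cover $\mathcal F$ of $\mathcal M=K_1\vee\mathcal H$ becomes exactly the natural cover $\iota_{\mathcal M}(k)$. The bijection between natural-cover colourings and proper $k$-colourings recalled in Section 2 then gives the claim. Throughout, $\mathcal F$ is assumed perfect, as stipulated at the start of this section.

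\emph{Step 1: relabelling.} For each $i\in\brk n$ define $\sigma_i:\brk k\to\brk k$ by $\sigma_i(j)=\varphi^{(i)}_j(v_i)$. The $k$ maps $\varphi^{(i)}_1,\dots,\varphi^{(i)}_k$ all have domain $e_i=\{w,v_i\}$, so they are pairwise disjoint. Their values at $w$ are $1,\dots,k$, so their values at $v_i$ must also be distinct. Hence $\sigma_i$ is a permutation. Given $f:V\cup\{w\}\to\brk k$, set $g(w)=f(w)$ and $g(v_i)=\sigma_i^{-1}(f(v_i))$; clearly $f\mapsto g$ is a bijection of all maps into $\brk k$. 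For the edge $e_i$, $f$ contains $\varphi^{(i)}_j$ if and only if $f(w)=j$ and $f(v_i)=\sigma_i(j)$, that is, if and only if $g(w)=g(v_i)=j$. So $f$ avoids all of $\mathcal F_{e_i}$ exactly when $g(w)\ne g(v_i)$.

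\emph{Step 2: the edges of $\mathcal H$.} Fix $p\in\brk m$. If $\varPhi_j$ is a level mapping, then some $\psi^{(p)}_{q}$ satisfies $\psi^{(p)}_q(v_i)=\sigma_i(j)$ for all $v_i\in e_{n+p}$. In the relabelled colours this map is the constant $j$ on $e_{n+p}$. For distinct level indices $j\ne j'$ these maps are distinct, because $\sigma_i(j)\ne\sigma_i(j')$. If all $k$ of the $\varPhi_j$ are level mappings, then, since $|\mathcal F_{e_{n+p}}|=k$ by perfectness, these constant maps account for all of $\mathcal F_{e_{n+p}}$.

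If exactly $k-1$ are level mappings, let $j_0$ be the exceptional index. The one remaining map $\psi\in\mathcal F_{e_{n+p}}$ is disjoint from the other $k-1$. Hence $\psi(v_i)\ne\sigma_i(j)$ for every $j\ne j_0$, which forces $\psi(v_i)=\sigma_i(j_0)$ for each $v_i\in e_{n+p}$. So $\psi$ is also the relabelled constant $j_0$.

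In either case, $f$ avoids $\mathcal F_{e_{n+p}}$ if and only if $g$ is not constant on $e_{n+p}$.

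\emph{Step 3: conclusion.} Combining Steps 1 and 2, $f$ is an $\mathcal F$-colouring of $\mathcal M$ if and only if $g$ is a proper $k$-colouring of $\mathcal M$. Therefore $P_{DP}(\mathcal M,\mathcal F)=P(\mathcal M,k)$.

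The main obstacle is the counting argument in Step 2 for the single non-level index. It relies on perfectness, so that each edge carries exactly $k$ maps, and on the disjointness condition of covers to pin down the remaining map. This is where the hypothesis "at least $k-1$" is used; with only $k-2$ level mappings, the two leftover maps could swap colours on different vertices.
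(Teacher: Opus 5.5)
Your proof is correct and follows the same route as the paper. The paper also asserts (with ``it is clear'') that the remaining $\varPhi_k$ is forced to be a level mapping, then relabels colours to identify $\mathcal F$ with the natural cover $\iota_{\mathcal M}(k)$ and invokes the natural-cover bijection. Your permutations $\sigma_i$ and your disjointness-plus-perfectness argument for the exceptional index just make those two steps explicit.
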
 
\begin{proof}
	Without loss of generality, we suppose  that $\varPhi_j$ are 
	level mappings for 
	each $j\in \brk{k-1}$. 
	It is clear that $\varPhi_k$ is also a level mapping. There exists a bijection between $\mathcal{F}$ and the natural $k$-cover 
	$
	\iota_{\mathcal M}(k)$ by putting for every $i\in\brk n$, $p\in \brk m$
	$$\varphi^{(i)}_j(v_i)=j, \psi^{(p)}_q(v)=q \mbox{ for each }j,p\in \brk k, v\in e_{n+p}.$$
	Because 
	there is a natural bijection between the $\iota_{\mathcal M}(k)$-colorings of $\mathcal M$ and the proper $k$-colorings of $\mathcal M$, there is a natural bijection between the $\mathcal{F}$-colorings of $\mathcal M$ and the proper $k$-colorings of $\mathcal M$. Thus $$P_{DP}(\mathcal M, \mathcal{F}) = P(\mathcal M, k).$$
\end{proof}

\begin{lemma}\label{2-2}
	For a hypergraph
	$\mathcal{H}$ with ${\rm col}(\mathcal{H}) =d$, 
	let $\mathcal{F}$ is a $k$-fold cover of $\mathcal{H}$ and let $\varPhi=\{\varPhi_j:j\in\brk k\}$,
	where $\varPhi_j=\{ \varphi^{(i)}_j(w): i\in \brk{n}\}$
	for each $j\in \brk{k}$.
	If $\varPhi$ contains $s$ mappings that are not level mappings, there exists $e \in E(\mathcal{H})$ such that
	\begin{align}\nonumber
	P_{DP}(\mathcal{M},\mathcal{F}) \geq kP_{DP}(\mathcal{H},k-1)
	+s(k-d-|e|)^{n-|e|}.
	\end{align}
\end{lemma}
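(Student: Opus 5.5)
We split the $\mathcal{F}$-colorings of $\mathcal M=K_1\vee\mathcal H$ according to the color $j\in\brk k$ that $w$ receives. For fixed $j$, coloring $w$ with $j$ forbids at each $v_i$ exactly the single color $\varphi^{(i)}_j(v_i)$. Since $\mathcal F$ is perfect, the remaining colors at each vertex form a set of size $k-1$. Restricting the partial maps on the edges $e_{n+p}$ to these surviving colors gives a cover $\mathcal{F}'_j$ of $\mathcal H$. After relabelling each surviving list bijectively with $\brk{k-1}$, $\mathcal{F}'_j$ becomes a $(k-1)$-fold cover of $\mathcal H$: partial maps with the same domain stay disjoint, and at most $k$ of them survive per edge. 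Hence
\[
P_{DP}(\mathcal M,\mathcal F)=\sum_{j\in\brk k} P_{DP}(\mathcal H,\mathcal F'_j),
\]
and every summand is at least $P_{DP}(\mathcal H,k-1)$. This already yields the term $kP_{DP}(\mathcal H,k-1)$, so it remains to gain $(k-d-|e|)^{n-|e|}$ from each non-level index $j$.

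Fix a non-level $j$. By definition there is an edge $e=e_{n+p}$ such that no $\psi^{(p)}_q$ agrees with $(\varphi^{(i)}_j(v_i))_{v_i\in e}$ on all of $e$. When $w$ gets color $j$, the partial maps in $\mathcal F_e$ are not wiped out uniformly. Compared with a \lq\lq level\rq\rq\ situation, in which exactly one map of $\mathcal F_e$ is killed, here either every map in $\mathcal F_e$ survives, or a map survives only partially in a way that wastes a constraint. So $\mathcal F'_j$ restricted to $e$ carries at least one more forbidden pattern than necessary among colorings using the surviving colors. The natural move is to compare $\mathcal F'_j$ with a cover $\mathcal G_j$ of $\mathcal H$ that has one genuinely redundant constraint on $e$ removed. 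Then $P_{DP}(\mathcal H,\mathcal F'_j)\ge P_{DP}(\mathcal H,k-1)+X_j$, where $X_j$ counts the colorings that avoid $\mathcal F'_j$ but would have been counted as extra in the extremal configuration.

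To bound $X_j$ from below, we pick a specific coloring pattern $\sigma$ on $e$ that violates none of the maps in $(\mathcal F'_j)_e$ yet uses colors that a level configuration would forbid. We then count extensions of $\sigma$ to $V\setminus e$ greedily along the degeneracy ordering witnessing ${\rm col}(\mathcal H)=d$, with the vertices of $e$ fixed first. Each remaining vertex $v$ has at most $d-1$ earlier neighbours in the ordering, and its choice of color must avoid the following:
\begin{itemize}
\item at most $d-1$ colors coming from partial maps on edges through earlier neighbours, using that distinct maps with a common domain are disjoint, so each edge blocks at most one color at $v$ given the colors already placed;
\item the one color removed by $w$;
\item at most $|e|$ further colors to keep the fixed pattern on $e$ intact.
\end{itemize}
This leaves at least $k-d-|e|$ choices per vertex, hence $X_j\ge (k-d-|e|)^{n-|e|}$. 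Summing over the $s$ non-level indices gives the claim.

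The main obstacle is the accounting in the last step. We must make precise that the colorings counted in $X_j$ are genuinely extra relative to $P_{DP}(\mathcal H,k-1)$, rather than just colorings of $\mathcal F'_j$. The cleanest route is to construct $\mathcal G_j$ explicitly as a $(k-1)$-fold cover with $\mathcal G_j\supseteq$ (relabelled) $\mathcal F'_j$, containing one additional partial map $\psi^*$ on $e$. Then
\[
P_{DP}(\mathcal H,\mathcal F'_j)=P_{DP}(\mathcal H,\mathcal G_j)+\#\{\text{colorings of }\mathcal F'_j\text{ containing }\psi^*\},
\]
and the greedy count above lower-bounds the last quantity. The delicate part is checking that $\psi^*$ can be added while keeping disjointness. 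This should follow because non-levelness leaves some color combination on $e$ uncovered by the surviving maps.
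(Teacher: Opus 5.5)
Your plan follows the paper's route, but the one step that uses non-levelness is left as ``should follow'', and the reason you give for it is not the right condition. The shared route is: split by the color of $w$, bound every summand below by $P_{DP}(\mathcal H,k-1)$, and for a non-level $j$ enlarge the surviving maps on the offending edge $e=e_{n+p}$ to a genuine $(k-1)$-fold cover. The paper adds the whole family $\varTheta$ of $k-l-1$ missing maps; you add one map $\psi^*$, and either is enough. Then you count greedily, with $e$ first in the degeneracy order, the colorings that contain the added map.

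The unproved step is the existence of $\psi^*$. Adding it while keeping disjointness requires, at \emph{every} $v_i\in e$, an allowed color (i.e.\ one $\neq\varphi^{(i)}_j(v_i)$) that no surviving map on $e$ uses. The statement ``some color combination on $e$ is uncovered'' holds for any cover of an edge with $|e|\ge 2$, so it does not give this. Your second paragraph also runs backwards. With $\mathcal F$ perfect it is impossible that every map of $\mathcal F_e$ survives. Moreover, the non-level case has one \emph{fewer} effective constraint on $e$ than a full $(k-1)$-fold cover, so $\mathcal G_j$ must be obtained by adding a map, not by removing a ``redundant'' one. Your final paragraph gets this right; the heuristic before it does not.

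The missing argument is the count the paper records as $l\le k-2$ and $|C_i|=k-1-l\ge 1$. Because $\mathcal F$ is perfect and the $k$ maps in $\mathcal F_e$ are pairwise disjoint, their values at each $v_i\in e$ exhaust $\brk k$. Hence there is a unique $q(i)$ with $\psi^{(p)}_{q(i)}(v_i)=\varphi^{(i)}_j(v_i)$. A map $\psi^{(p)}_q$ constrains colorings with $w\mapsto j$ only if $q\notin\{q(i):v_i\in e\}$. Non-levelness of $\varPhi_j$ at $e$ says exactly that $i\mapsto q(i)$ is not constant, so at least two maps are killed and at most $k-2$ survive. The survivors occupy at most $k-2$ of the $k-1$ allowed colors at each $v_i$, which leaves a free allowed color $c_i$. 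Then $\psi^*:v_i\mapsto c_i$ is disjoint from every survivor.

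With this inserted, your identity $P_{DP}(\mathcal H,\mathcal F'_j)=P_{DP}(\mathcal H,\mathcal G_j)+\#\{\text{colorings of }\mathcal F'_j\text{ containing }\psi^*\}$ and the greedy bound go through as in the paper. The paper's own looseness is inherited unchanged:
\begin{itemize}
\item the edge $e$ depends on $j$, so to get a single $e$ you should take the one minimizing $(k-d-|e|)^{n-|e|}$;
\item edges properly contained in $e$ are ignored, which is harmless for uniform $\mathcal H$;
\item charging one blocked color per earlier neighbour rather than per earlier edge is only justified when distinct edges through a vertex meet only there, e.g.\ for linear hypergraphs.
\end{itemize}
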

\begin{proof}
    Without loss of generality, suppose that $\varPhi_j$ is not a level mapping for each $j \in \brk{s}$. It is easy to know that
	\begin{align}\nonumber
	P_{DP}(\mathcal M,\mathcal{F})=\sum_{j = 1}^{k}P_{DP}(\mathcal{H},\mathcal{F}_j)\geq k P_{DP}(\mathcal{H},k-1)
	\end{align}
	where 
	$\mathcal{F}_j$ is defined 
		in Page ~\pageref{F_j}.

Let $j \in \brk{s}$.
Then $\varPhi_j$ is not a level mapping and 
	there exists $e_{n+p} \in |E(\mathcal{H})|$ such that 
	$$|\psi:{\rm dom}(\psi)=e_{n+p}, \psi\in\mathcal{F}_j|=l \le k-2.$$  
	For each $v_i\in e_{n+p}$, let 
	$$C_i=\brk{k}\setminus(\{\psi^{(p)}(v_i):\psi^{(p)}\in \mathcal{F}_j\}\cup\{\varphi^i_t(v_i)\}).$$
It is clear that for each $v_i\in e_{n+p}$
$$|C_i|=k-1-l.$$
Obviously, we can add $k-l-1$ partial maps with the domain $e_{n+p}$ and denote the set of these partial maps by $\varTheta$. 	
	Let $\mathcal{F}_j^*=\mathcal{F}_j\cup \varTheta$. Thus, $P_{DP}(\mathcal{H}, \mathcal{F}^{*}_j)$ is the number of $\mathcal{F}_j$-colorings of $\mathcal{H}$ avoiding maps in $\varTheta$. Hence, there are at least $P_{DP}(\mathcal{H},k-1)$ $\mathcal{F}_t$-colorings of $\mathcal{H}$ avoiding $\varTheta$.
	
	Suppose that $v_1, v_2, ..., v_n$ is an ordering of the vertices of $\mathcal{H}$ such that $v_i$ has at most $d-1$ neighbors preceding it in the ordering. Consider the following ordering $S$ of the vertices of $\mathcal{H}$ obtained by moving the vertices in $e_{n+p}$ to the front of the entire sequence:
	$$v_{j_1}, v_{j_2}, ..., v_{j_n}.$$
	Thus, there are at least $((k-1)-(d+|e|-1))^{n-|e|}
	=(k-d-|e|)^{n-|e|} \mathcal{F}_j$-colorings of $\mathcal{H}$ that can color $v_i \in V(e_{n+p})$ with $\psi\in\varTheta$. This immediately implies 
	$$P_{DP}(\mathcal{H}, \mathcal{F}_{j}) \ge P_{DP}(\mathcal{H},k-1)+(k-d-n_e)^{n-n_e}.$$

\end{proof}

It follows readily from Lemmas \ref{2-1} and \ref{2-2} that:
\begin{theorem}\label{th-2-1}
	Suppose that $\mathcal{H}$ is a hypergraph with ${\rm col}(G)\geq 3$. 
Let $n_0$ be the maximum value 
of $|e|$ over all edges in $\hyh$.
	Then for $k \geq {\rm col}(G)+n_0$,
	\begin{align}\nonumber
		P_{DP}(\hyh \vee K_1,k)\geq \min
		\left \{
		P(\mathcal{H}\vee K_1,k),kP_{DP}(\mathcal{H},k - 1)+f(x)\right \}
	\end{align}
	where $f(x) \geq 0$. In particular, when $\mathcal{H}$ is an r-uniform hypergraph,
	\begin{align}\nonumber
		P_{DP}(\mathcal{H}\vee K_1,k)\geq \min\left \{P(\mathcal{H}\vee K_1,k),kP_{DP}(\mathcal{H},k - 1)+2(k {\rm col}(\mathcal{H})-r)^{|V(G)| - r}\right \}.
	\end{align} 
\end{theorem}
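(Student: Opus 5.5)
The plan is to take an arbitrary $k$-fold cover $\mathcal F$ of $\mathcal M=\hyh\vee K_1$. By the remark at the start of this section, we may assume $\mathcal F$ is perfect, since completing a cover to a perfect one can only decrease the number of colorings. We then split into two cases according to the number $s$ of indices $j\in\brk k$ for which $\varPhi_j$ fails to be a level mapping. Since $P_{DP}(\hyh\vee K_1,k)$ is the minimum of $P_{DP}(\mathcal M,\mathcal F)$ over all covers, it suffices to show that every such $\mathcal F$ satisfies $P_{DP}(\mathcal M,\mathcal F)\ge \min\{P(\hyh\vee K_1,k),\,kP_{DP}(\hyh,k-1)+f\}$.

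\emph{Case $s\le 1$.} Then $\varPhi$ has at least $k-1$ level mappings, and Lemma~\ref{2-1} gives $P_{DP}(\mathcal M,\mathcal F)=P(\mathcal M,k)$. This is at least the minimum.

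\emph{Case $s\ge 2$.} We apply Lemma~\ref{2-2}. For each non-level index $j$ it produces an edge $e_j$ and an extra contribution $(k-d-|e_j|)^{n-|e_j|}$ on top of the baseline $P_{DP}(\hyh,\mathcal F_j)\ge P_{DP}(\hyh,k-1)$. Summing over $j$ gives
\[
P_{DP}(\mathcal M,\mathcal F)\ge kP_{DP}(\hyh,k-1)+\sum_{j\le s}(k-d-|e_j|)^{n-|e_j|}.
\]
We define $f$ to be the sum of the two smallest possible extra terms. Every base $k-d-|e_j|$ is nonnegative when $k\ge d+n_0$, which is exactly why the hypothesis $k\ge {\rm col}(\hyh)+n_0$ is imposed. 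Dropping all but two of the (nonnegative) extra terms gives a bound $f\ge 0$ that is independent of $\mathcal F$.

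For the uniform statement, every edge has $|e|=r$, so each extra term equals $(k-d-r)^{n-r}$. With $s\ge 2$ this yields at least $2(k-{\rm col}(\hyh)-r)^{|V(\hyh)|-r}$, which I read as the intended form of the displayed expression. Taking the minimum over the two cases finishes the proof.

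The main obstacle is making the case split and the uniformity of $f$ rigorous. First, Lemma~\ref{2-2} is stated with a single edge $e$, whereas different non-level indices may give different edges. So $f$ must be chosen as a worst case over edges, namely $f=2(k-d-n_0)^{n-n_0}$. This requires the base $k-d-n_0$ to be at least $1$; if it is $0$ the bound degenerates to $f=0$, which is still allowed. Second, one must check that the per-$j$ bound in Lemma~\ref{2-2}, namely $P_{DP}(\hyh,\mathcal F_j)\ge P_{DP}(\hyh,k-1)+(\cdots)$, is what actually gets summed. I would state that bound explicitly, with the sum over $j$ split into the $s$ non-level indices and the $k-s$ level indices, where the latter contribute at least $P_{DP}(\hyh,k-1)$ each.
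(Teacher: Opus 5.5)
Your proposal is correct and is exactly the argument the paper intends when it says the theorem ``follows readily'' from Lemmas~\ref{2-1} and~\ref{2-2}. If at most one $\varPhi_j$ is non-level, Lemma~\ref{2-1} gives $P_{DP}(\mathcal M,\mathcal F)=P(\mathcal M,k)$; if at least two are non-level, Lemma~\ref{2-2} gives the extra $2(k-d-|e|)^{n-|e|}$ term, and $k\ge {\rm col}(\mathcal H)+n_0$ keeps the bases nonnegative, while your worst-case choice $|e|=n_0$ handles different non-level indices producing different edges, a detail the paper leaves implicit.
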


\begin{corollary} \label{co-2-1}
Suppose that $\mathcal{H}$ is an $r$-uniform hypergraph with $n$ vertices and 
${\rm col}(\mathcal{H})\geq 3$. Then for any $p\in\mathbb{N}$ and $k \geq {\rm col}(\mathcal{H})+ r + p$,
\begin{align}\nonumber
	P_{DP}( \mathcal{H}\vee K_p,k)\geq \min\left\{P( \mathcal{H}\vee K_p,k),\left(\prod_{j = 0}^{p - 1}(k - j)\right)P_{DP}(\mathcal{H},k - p)+f(k)\right\}
\end{align}
where $f(k)$ is a polynomial in $k$ of degree $n - 3 + p$ with a leading coefficient of $2p$.
\end{corollary}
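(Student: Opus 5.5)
Induction on $p$, with Theorem~\ref{th-2-1} handling each single join. The base case $p=1$ is exactly the uniform case of Theorem~\ref{th-2-1}, applied with $k\ge {\rm col}(\mathcal{H})+r+1$. There the error term $2(k-{\rm col}(\mathcal{H})-r)^{n-r}$ is a polynomial of degree $n-r$ with leading coefficient $2$. The claimed degree is $n-3+p$, which matches when $r=3$ (so the statement implicitly normalises to that degree). In general I will carry the term as $f_p(k)$, a polynomial of degree $n-r+p-1$ with leading coefficient $2p$, and read the statement's degree with $r=3$.

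For the inductive step, write $\mathcal{H}\vee K_p=(\mathcal{H}\vee K_{p-1})\vee K_1$ and set $\mathcal{H}_{p-1}=\mathcal{H}\vee K_{p-1}$. This hypergraph is not uniform, since it has $r$-edges and $2$-edges, so I will use the general form of Theorem~\ref{th-2-1}. Its proof via Lemma~\ref{2-2} still gives an additive term $s(k-d'-|e|)^{n'-|e|}$ for each non-level mapping. Here $d'={\rm col}(\mathcal{H}_{p-1})\le {\rm col}(\mathcal{H})+p-1$ and $n'=n+p-1$. Choosing the witnessing edge $e$ to be an $r$-edge of $\mathcal{H}$ (if a $2$-edge witnesses, the term is even larger) gives an extra term of degree at least $n-r+p-1$.

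This yields
\begin{equation*}
P_{DP}(\mathcal{H}_p,k)\ge \min\{P(\mathcal{H}_p,k),\, kP_{DP}(\mathcal{H}_{p-1},k-1)+g(k)\}.
\end{equation*}
I then apply the induction hypothesis to $P_{DP}(\mathcal{H}_{p-1},k-1)$; this needs $k-1\ge {\rm col}(\mathcal{H})+r+p-1$, which is our assumption on $k$. The inner minimum splits into two cases. If it equals $P(\mathcal{H}_{p-1},k-1)$, then $kP(\mathcal{H}_{p-1},k-1)=P(\mathcal{H}_p,k)$ by the join formula, and we land in the first branch. Otherwise we get
\begin{equation*}
k\Bigl(\prod_{j=0}^{p-2}(k-1-j)\Bigr)P_{DP}(\mathcal{H},k-p)+kf_{p-1}(k-1)+g(k),
\end{equation*}
and the product becomes $\prod_{j=0}^{p-1}(k-j)$. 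The new term $f_p(k)=kf_{p-1}(k-1)+g(k)$ has degree one higher. Its leading coefficient is $2(p-1)$ from the first summand plus the contribution of $g$.

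The main obstacle is the bookkeeping of this leading coefficient. The $g$ term must contribute $2$ at the same top degree, yet $g$ naturally has degree $n-r+p-1$, while $kf_{p-1}(k-1)$ has degree $n-r+p-1$ as well. So the degrees align and the coefficients add to $2p$, provided $s\ge 2$ non-level mappings occur whenever the cover is not equivalent to the natural one. That is the content of Lemma~\ref{2-1}: at most $k-2$ of the mappings are level, so $s\ge 2$. I would verify this alignment carefully, along with the nonnegativity of all lower-order terms for $k\ge {\rm col}(\mathcal{H})+r+p$.
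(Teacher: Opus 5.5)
Your proposal is correct and follows the paper's proof essentially line for line. Both use induction on $p$ with the base case from Theorem~\ref{th-2-1}, then the inductive step via $K_1\vee(K_{p-1}\vee\mathcal{H})$ with Lemmas~\ref{2-1} and~\ref{2-2} (giving $s\ge 2$), the bounds ${\rm col}(K_{p-1}\vee\mathcal{H})\le{\rm col}(\mathcal{H})+p-1$ and $|e|\le r$, the case split on the inner minimum, and the coefficient count $2(p-1)+2=2p$. One small slip: the degree $n-r+p-1$ that you and the paper's proof actually obtain equals the stated $n-3+p$ only when $r=2$, so the stated degree is a leftover from the graph case; for $r=3$ it is $n-4+p$, not $n-3+p$ as you claim.
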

\begin{proof}
	The proof proceeds by induction on $p$. The result holds for $p=1$ by Theorem \ref{th-2-1}. Assume  that the result holds for all numbers less than $p$ ($p \geq 2$). Next consider the case $p$.
	
	Suppose that $k\geq \text{col}(\mathcal{H})+r + p$. Since $K_p \vee \mathcal{H}=K_1 \vee (K_{p - 1} \vee  \mathcal{H})$ and $\text{col}(K_{p - 1} \vee \mathcal{H})\leq \text{col}(\mathcal{H})+p - 1$, lemmas \ref{2-1} and \ref{2-2} imply that 
\begin{align}\nonumber
		P_{DP}( \mathcal{H}\vee K_p,k)&\geq \min\{P(\mathcal{H}\vee K_p,k),kP_{DP}(K_{p - 1}\vee \mathcal{H},k - 1)\\ \nonumber
		&\quad+2(k-\text{col}(K_{p - 1}\vee \mathcal{H})-n_e)^{n-n_e-1 + p}\}
\end{align}
	where $n_e$ denotes the length of a certain edge in $ K_{p-1} \vee \mathcal{H} $ and $n_e \le r$. Then $$(k-\text{col}(K_{p - 1}+\mathcal{H})-n_e)^{n-n_e-1 + p} \geq (k-\text{col}(K_{p - 1}\vee \mathcal{H})-r)^{n-r + p}$$
	 and
	\begin{align}
		\nonumber
		P_{DP}(K_p \vee \mathcal{H},k)&\geq \min\{P(K_p \vee \mathcal{H},k),kP_{DP}(K_{p - 1}\vee \mathcal{H},k - 1)\\ \nonumber
		&\quad+2(k-\text{col}(K_{p - 1}\vee \mathcal{H})-r)^{n- r + p}\}.
	\end{align}

	Because $k -1 \geq \text{col}(\mathcal{H})+r-1 + p $, by the inductive hypothesis it implies that 
	\begin{align}
		\nonumber
		P_{DP}&(K_{p - 1}\vee \mathcal{H},k - 1)\\ \nonumber
		&\geq \min\left\{P(K_{p - 1}\vee \mathcal{H},k - 1),\left(\prod_{j = 1}^{p - 1}(k - j)\right)P_{DP}(\mathcal{H},k - p)+f(k)\right\}
	\end{align}
	where $f(k)$ is a polynomial in $k$ of degree $n - r-2 + p$ with a leading coefficient of $2(p - 1)$.
		
	If $P(K_{p - 1}\vee \mathcal{H},k - 1)\leq\left(\prod_{j = 1}^{p - 1}(k - j)\right)P_{DP}(\mathcal{H},k - p)+f(k)$, then we get that 
	$$P_{DP}(K_{p - 1}\vee \mathcal{H},k - 1)=P(K_{p - 1}\vee \mathcal{H},k - 1).$$
	 This implies that 
	 $$kP_{DP}(K_{p - 1}\vee \mathcal{H},k - 1)=kP(K_{p - 1}\vee \mathcal{H},k - 1)=P(K_{p}+\mathcal{H},k).$$ Thus 
	\begin{align}\nonumber
			P_{DP}(K_{p}\vee \mathcal{H},k)&\geq\min\{P(K_{p}+\mathcal{H},k),kP_{DP}(K_{p - 1}\vee \mathcal{H},k - 1)\\ \nonumber
			&\quad+2(k - \text{col}(K_{p - 1}+\mathcal{H})-r)^{n-r-1 + p}\}.
	\end{align}
This implies $P_{DP}(K_{p}\vee \mathcal{H},k)=P(K_{p}+\mathcal{H},k)$ and the result is deduced. 
	
	Otherwise 
	$$P(K_{p - 1}\vee \mathcal{H},k - 1)>\left(\prod_{j = 1}^{p - 1}(k - j)\right)P_{DP}(\mathcal{H},k - p)+f(k).$$ 
It can be calculated that:
	\begin{align*}
		&kP_{DP}(K_{p - 1}\vee \mathcal{H},k - 1)+2(k - \text{col}(K_{p - 1}\vee \mathcal{H})-r)^{n - r-1 + p}\\
		&\geq k\left(\left(\prod_{j = 1}^{p - 1}(k - j)\right)P_{DP}(\mathcal{H},k - p)+f(k)\right)+2(k - \text{col}(K_{p - 1}\vee \mathcal{H})-r)^{n - r-1 + p}\\
		&=\left(\prod_{j = 0}^{p - 1}(k - j)\right)P_{DP}(\mathcal{H},k - p)+kf(k)+2(k - \text{col}(K_{p - 1}\vee \mathcal{H})-r)^{n -r-1 + p}.
	\end{align*}
    Let $g(k)=kf(k)+2(k - \text{col}(K_{p - 1}\vee \mathcal{H})-r)^{n - r-1 + p}$. Then $g(k)$ is a polynomial in $k$ of degree $n - r-1 + p$ with a leading coefficient of $2(p - 1)+2 = 2p$. Moreover, we obtain that
	\begin{align}\nonumber
		P_{DP}(K_{p}\vee \mathcal{H},k)\geq\min\left\{P(K_{p}\vee \mathcal{H},k),\left(\prod_{j = 0}^{p - 1}(k - j)\right)P_{DP}(\mathcal{H},k - p)+g(k)\right\}.
	\end{align}
		
\end{proof}

Now we are going to end this section with a partial answer to 
Problem \ref{Pro-3}. 

\begin{theorem}\label{Ans-3}
	Suppose that $\mathcal{H}$ is an r-uniform hypergraph on $n$ vertices such that
	$$
	P(\mathcal{H},k)-P_{DP}(\mathcal{H},k)=O(k^{n-r-1})\quad\text{as }k\to\infty.
	$$
	Then there exist $p\in \N$
	such that 
	$P_{DP}(K_p \vee \mathcal{H},k) 
	\infe P(K_p \vee \mathcal{H},k)$.
\end{theorem}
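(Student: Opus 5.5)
We derive Theorem~\ref{Ans-3} from Corollary~\ref{co-2-1} by comparing the two quantities in its minimum. Since $P_{DP}\le P$ always, it suffices to find $p$ and $N$ with $P_{DP}(\mathcal{H}\vee K_p,k)\ge P(\mathcal{H}\vee K_p,k)$ for all $k\ge N$. Corollary~\ref{co-2-1} gives, for $k\ge {\rm col}(\mathcal{H})+r+p$,
\[
P_{DP}(\mathcal{H}\vee K_p,k)\ge \min\left\{P(\mathcal{H}\vee K_p,k),\ \Big(\prod_{j=0}^{p-1}(k-j)\Big)P_{DP}(\mathcal{H},k-p)+f_p(k)\right\},
\]
where $f_p$ has positive leading coefficient $2p$. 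If ${\rm col}(\mathcal{H})<3$, we first replace $\mathcal{H}$ by $\mathcal{H}\vee K_{q}$ for a small $q$; this raises the coloring number and preserves the hypothesis after the shift. So it is enough to show that the second entry of the minimum eventually exceeds the first.

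We use the identity $P(\mathcal{H}\vee K_p,k)=\big(\prod_{j=0}^{p-1}(k-j)\big)P(\mathcal{H},k-p)$. The difference between the second and first entries is then
\[
D_p(k)=f_p(k)-\Big(\prod_{j=0}^{p-1}(k-j)\Big)\big(P(\mathcal{H},k-p)-P_{DP}(\mathcal{H},k-p)\big).
\]
By hypothesis $P(\mathcal{H},k)-P_{DP}(\mathcal{H},k)\le Ck^{n-r-1}$ for large $k$. Since $(k-p)^{n-r-1}\le k^{n-r-1}$ for $k\ge p$, the subtracted term is at most $Ck^{p}\cdot k^{n-r-1}=Ck^{n-r-1+p}$, up to lower-order terms, for large $k$.

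Next we need the degree of $f_p$. The corollary states $n-3+p$, but tracing the proof (Theorem~\ref{th-2-1} contributes $2(k-{\rm col}-r)^{n-r}$ at $p=1$, and each induction step multiplies by $k$ and adds a term of the same degree) suggests $\deg f_p=n-r+p-1$, with leading coefficient $2p$. We would re-derive this carefully, since the whole argument depends on it.

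The main obstacle is that $f_p$ and the error term have the same degree $n-r-1+p$, so the comparison rests on leading coefficients. The leading coefficient of $D_p$ is at least $2p-C$, where $C$ is the constant in the $O$-bound. That constant does not depend on $p$, because it comes only from $\mathcal{H}$. Choosing $p>C/2$ makes $D_p(k)>0$ for all sufficiently large $k$. For those $k$ the minimum in Corollary~\ref{co-2-1} equals $P(\mathcal{H}\vee K_p,k)$, so $P_{DP}\ge P$, and hence $P_{DP}(\mathcal{H}\vee K_p,k)=P(\mathcal{H}\vee K_p,k)$ for all $k\ge N$.

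If instead the degree of $f_p$ is really $n-3+p$, then it exceeds $n-r-1+p$ whenever $r\ge 3$, and any $p\ge 1$ works. The cases $r=2$ and the shift made to ensure ${\rm col}(\mathcal{H})\ge3$ would then be handled by the same leading-coefficient comparison.
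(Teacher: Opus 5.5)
Your proposal is correct and is essentially the paper's proof. Like the paper, you invoke Corollary~\ref{co-2-1} with $\deg f_p=n-r-1+p$ and leading coefficient $2p$ (the paper's proof reads it this way too, despite the $n-3+p$ in the corollary's statement), bound the error term by $Ck^{n-r-1+p}$, and choose $p>C/2$ so that the second entry of the minimum eventually dominates $P(\mathcal{H}\vee K_p,k)$. The only deviation is the case ${\rm col}(\mathcal{H})\le 2$, which the paper simply calls trivial: your shift to $\mathcal{H}\vee K_q$ is not $r$-uniform when $r\ge 3$, but there ${\rm col}(\mathcal{H})\le 2$ forces $\mathcal{H}$ to be edgeless (or a forest when $r=2$), so $\mathcal{H}\vee K_p$ is a chordal graph and the case is immediate without any shift.
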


\begin{proof}
	Since the result is trivial when $\text{col}(\mathcal{H})\leq 2$, suppose that $\text{col}(\mathcal{H})\geq 3$. Additionally, there exist constants $C,N_1\in\mathbb{N}$ satisfying
	$$
	P(\mathcal{H},k)-P_{DP}(\mathcal{H},k)\leq Ck^{n - r-1}
	$$
	whenever $k\geq N_1$.
	
    Fix a natural number $p$ with $p > C/2$. Then for all $k\geq p + N_1$, we have that
	\begin{align*}
		\left(\prod_{j = 0}^{p - 1}(m - j)\right)P_{DP}(\mathcal{H},k - p)&\geq\left(\prod_{j = 0}^{p - 1}(k - j)\right)\left(P(\mathcal{H},k - p)-C(k - p)^{n - 3}\right)\\
		&=P(K_p \vee \mathcal{H},k)-C(k - p)^{n - r-1}\prod_{j = 0}^{p - 1}(k - j).
	\end{align*}
	Corollary \ref{co-2-1} implies that for $k\geq\text{col}(G)+r+ p$,
	\equ{eq6}
	{
	P_{DP}(K_p \vee \mathcal{H},k)\geq\min\left\{P(K_p \vee \mathcal{H},k),\left(\prod_{j = 0}^{p - 1}(k - j)\right)P_{DP}(\mathcal{H},k - p)+f(k)\right\}
}
	where $f(k)$ is a polynomial in $k$ of degree $n - r-1 + p$ with a leading coefficient of $2p$. 
	Because $p > C/2$, it follows that $f(k)-C(k - p)^{n - r-1}\prod_{j = 0}^{p - 1}(k - j)$ is a polynomial of degree $n - r-1 + p$ with a positive leading coefficient. Thus there exists an $N\in\mathbb{N}$ such that for each $k\geq N$	
\eqn{eq7}
{
P(K_p \vee \mathcal{H},k)&\leq& P(K_p \vee \mathcal{H},k)+f(k)-C(k - p)^{n - 3}\prod_{j = 0}^{p - 1}(k - j)\nonumber \\
		&\leq &\left(\prod_{j = 0}^{p - 1}(k - j)\right)P_{DP}(\mathcal{H},k - p)+f(k).
}	
	 The theorem then follows directly from (\ref{eq6})
	 and  (\ref{eq7}).
\end{proof}

\vskip 3mm
\noindent{\bf Acknowledgment  }

This research is supported by National Natural Science Foundation of
China under Grant
No. 12371340.

\end{document}